\newtcbox{\mymath}[1][]{%
    nobeforeafter, math upper, tcbox raise base,
    enhanced, colframe=blue!30!black,
    colback=blue!30, boxrule=1pt,
    #1}
\definecolor{shadecolor}{gray}{0.90}
\DeclareMathOperator{\newF}{\mathcal{F}}
\newcommand{\divides}{\,\vert\,}
\newcommand{\ndivides}{\centernot\vert}
\newcommand{\FibSeq}{\left(F_n\right)_{n=0}^\infty}
\newcommand{\LucSeq}{\left(L_n\right)_{n=0}^\infty}
\newcommand{\Fmn}{\left(\newF_{m,n}\right)_{n=0}^\infty}
\newcommand{\Ften}{\left(\newF_{10,n}\right)_{n=0}^\infty}
\newcommand{\Fmnsub}{\left(\newF_{m,k+rj}\right)_{j=0}^\infty}
\newcommand{\Ftensub}{\left(\newF_{10,k+rj}\right)_{j=0}^\infty}
\newcommand{\Ftensubfill}[2]{\left(\newF_{10, #1 + #2 j}\right)_{j=0}^\infty}
\newcommand{\FtensubZeroK}{\left(\newF_{10,rj}\right)_{j=0}^\infty}
\newcommand{\F}[1]{\newF_{10, #1}}
\newcommand{\Ftenfill}[2]{\newF_{10, #1 + #2 j}}
\newcommand{\FtensubNkrFwd}{\left(\newF_{10,n}\right)_{n=N_{k,r}}^\infty}
\newcommand{\FtensubNkrRev}{\left(\newF_{10,n}\right)_{n=N_{k,r}}^{-\infty}}
\newcommand{\FtensubCompletePeriod}{\left(\newF_{10,k+rj}\right)_{j=0}^{59}}
\newcommand{\pmodd}[1]{\!\!\!\pmod{#1}}	
\newcommand{\red}[1]{\textcolor{red}{#1}}
\newcommand{\redbf}[1]{\textcolor{red}{\textbf{#1}}}
\newcommand{\bluebf}[1]{\textcolor{blue}{\textbf{#1}}}
\DeclareMathOperator{\lcm}{lcm}
\DeclareMathOperator{\ind}{ind}
\newcommand*\circled[1]{\tikz[baseline=(char.base)]{
            \node[shape=circle,draw,inner sep=2pt] (char) {#1};}} 
\newtheorem{theorem}{Theorem}[section] 
\newtheorem{lemma}[theorem]{Lemma}
\newtheorem{corollary}[theorem]{Corollary}
\newtheorem{proposition}[theorem]{Proposition}
\newtheorem{formula}[theorem]{Formula}
\newtheorem{algorithm}[theorem]{Algorithm}
\theoremstyle{definition}
\newtheorem{example}[theorem]{Example}
\theoremstyle{definition}
\newtheorem{definition}[theorem]{Definition}
\theoremstyle{plain}
\theoremstyle{definition}
\newtheorem{question}[theorem]{Question}
\theoremstyle{definition}
\newtheorem{remark}[theorem]{Remark}
\theoremstyle{definition}
\numberwithin{figure}{section}
\numberwithin{theorem}{section}
\numberwithin{table}{section}
\newcommand{\nocontentsline}[3]{}
\newcommand{\tocless}[2]{\bgroup\let\addcontentsline=\nocontentsline#1{#2}\egroup}
\newcommand\ackname{Acknowledgments}
  \newenvironment{acknowledgments}{%
      \titlepage
      \null\vfil
      \@beginparpenalty\@lowpenalty
      \begin{center}%
        \bfseries \ackname
        \@endparpenalty\@M
      \end{center}}%
     {\par\vfil\null\endtitlepage}
\newcommand{\modcircleWithBlack}[4]{
\begin{tikzpicture}[scale=#4, every node/.style={scale=#4}]
\draw[blue, thick] (0,0) circle (2in);
\foreach \x in {6,12,...,360} {
\draw (\x: 2in) -- (\x: 2.1in);}
\foreach \x in {0,90,180,270} {
\draw[red, ultra thick] (\x: 2in) -- (\x: 2.1in);
\draw[red, ultra thick, ->] (\x: 2.3in) -- (\x: 2.5in);
}
\foreach \x in {30,60,120,150,210,240,300,330} {
\draw[blue, thick] (\x: 2in) -- (\x: 2.1in);
\draw[blue, thick, ->] (\x: 2.3in) -- (\x: 2.4in);
}
\draw[dotted] (0:2in) -- (180:2in);
\draw[dotted] (90:2in) -- (270:2in);
\foreach \n/\t in {0/\redbf{0},6/7,12/3,18/4,24/9,30/\bluebf{5},36/4,42/1,48/3,54/8,60/\bluebf{5},66/3,72/2,78/1,84/1,90/\redbf{0},96/1,102/9,108/2,114/7,120/\bluebf{5},126/2,132/3,138/9,144/4,150/\bluebf{5},156/9,162/6,168/3,174/3,180/\redbf{0},186/3,192/7,198/6,204/1,210/\bluebf{5},216/6,222/9,228/7,234/2,240/\bluebf{5},246/7,252/8,258/9,264/9,270/\redbf{0},276/9,282/1,288/8,294/3,300/\bluebf{5},306/8,312/7,318/1,324/6,330/\bluebf{5},336/1,342/4,348/7,354/7} {
\draw (\n: 2.2in) node{\t};}

\def\r{-#1*6}
\foreach \k in {0,...,#2} {
\foreach \n in {0,...,#3} {
\draw[red, ultra thin] (\n*\r-6*\k+90: 2in) -- (\n*\r+\r-6*\k+90: 2in);}
}

\def\r{-#1*6}
\foreach \k in {0,...,0} {
\foreach \n in {0,...,#3} {
\draw[black, very thick] (\n*\r-6*\k+90: 2in) -- (\n*\r+\r-6*\k+90: 2in);}
}
\end{tikzpicture}
}
\newcommand{\modcirclenk}[6]{
\begin{tikzpicture}[scale=#6, every node/.style={scale=#6}]
\draw[blue, thick] (0,0) circle (2in);
\foreach \x in {6,12,...,360} {
\draw (\x: 2in) -- (\x: 2.1in);}
\foreach \x in {0,90,180,270} {
\draw[red, ultra thick] (\x: 2in) -- (\x: 2.1in);
\draw[red, ultra thick, ->] (\x: 2.3in) -- (\x: 2.5in);
}
\foreach \x in {30,60,120,150,210,240,300,330} {
\draw[blue, thick] (\x: 2in) -- (\x: 2.1in);
\draw[blue, thick, ->] (\x: 2.3in) -- (\x: 2.4in);
}
\draw[dotted] (0:2in) -- (180:2in);
\draw[dotted] (90:2in) -- (270:2in);
\foreach \n/\t in {0/\redbf{0},6/7,12/3,18/4,24/9,30/\bluebf{5},36/4,42/1,48/3,54/8,60/\bluebf{5},66/3,72/2,78/1,84/1,90/\redbf{0},96/1,102/9,108/2,114/7,120/\bluebf{5},126/2,132/3,138/9,144/4,150/\bluebf{5},156/9,162/6,168/3,174/3,180/\redbf{0},186/3,192/7,198/6,204/1,210/\bluebf{5},216/6,222/9,228/7,234/2,240/\bluebf{5},246/7,252/8,258/9,264/9,270/\redbf{0},276/9,282/1,288/8,294/3,300/\bluebf{5},306/8,312/7,318/1,324/6,330/\bluebf{5},336/1,342/4,348/7,354/7} {
\draw (\n: 2.2in) node{\t};}

\def\r{-#1*6}
\foreach \k in {#2,...,#3} {
\foreach \n in {#4,...,#5} {
\draw[red] (\n*\r-6*\k+90: 2in) -- (\n*\r+\r-6*\k+90: 2in);}
}
\end{tikzpicture}
}
\begin{document}

\title{\textcolor{red}{\textbf{Tantalizing properties of subsequences of the Fibonacci sequence modulo 10}}}

\author{
Dan Guyer \\  
\href{mailto:guyerdm7106@uwec.edu}{\nolinkurl{guyerdm7106@uwec.edu}}
\and
aBa Mbirika\\
\href{mailto:mbirika@uwec.edu}{\nolinkurl{mbirika@uwec.edu}}
\and
Miko Scott\\
\href{mailto:mbscott11@outlook.com}{\nolinkurl{mbscott11@outlook.com}}
}

\date{}

\makeatletter
\newcommand{\subjclass}[2][2020]{%
  \let\@oldtitle\@title%
  \gdef\@title{\@oldtitle\footnotetext{#1 \emph{Mathematics Subject Classification.} #2}}%
}
\newcommand{\keywords}[1]{%
  \let\@@oldtitle\@title%
  \gdef\@title{\@@oldtitle\footnotetext{\emph{Key words and phrases.} #1.}}%
}
\makeatother

\keywords{Fibonacci sequence, Pisano period, modular arithmetic, subsequence}
\subjclass{Primary 11B39, 11B50}

\maketitle

\begin{abstract}
The Fibonacci sequence modulo $m$, which we denote $\left(\newF_{m,n}\right)_{n=0}^\infty$ where $\newF_{m,n}$ is the Fibonacci number $F_n$ modulo $m$, has been a well-studied object in mathematics since the seminal paper by D.~D.~Wall in 1960 exploring a myriad of properties related to the periods of these sequences. Since the time of Lagrange it has been known that $\Fmn$ is periodic for each $m$. We examine this sequence when $m=10$, yielding a sequence of period length 60. In particular, we explore its subsequences composed of every $r^{\mathrm{th}}$ term of $\Ften$ starting from the term $\F{k}$ for some $0 \leq k \leq 59$. More precisely we consider the subsequences $\Ftensub$, which we show are themselves periodic and whose lengths divide 60. Many intriguing properties reveal themselves as we alter the $k$ and $r$ values. For example, for certain $r$ values the corresponding subsequences surprisingly obey the Fibonacci recurrence relation; that is, any two consecutive subsequence terms sum to the next term modulo 10. Moreover, for all $r$ values relatively prime to 60, the subsequence $\Ftensub$ coincides exactly with the original parent sequence $\Ften$ (or a cyclic shift of it) running either forward or reverse. We demystify this phenomena and explore many other tantalizing properties of these subsequences.
\end{abstract}

\tableofcontents


\section{Introduction}\label{sec:Introduction}
A rich source of research on the Fibonacci sequence $\FibSeq$ over the past 60 years has been the sequence  reduced modulo $m$, which we denote $\Fmn$ where $\newF_{m,n}$ equals the Fibonacci number $F_n$ modulo $m$. For example, the first 17 terms of the Fibonacci sequence $\FibSeq$ and the corresponding reduced modulo 8 sequence $\left(\newF_{8,n}\right)_{n=0}^\infty$ is as follows:
\begin{center}
    \begin{tabular}{|c||c|c|c|c|c|c|c|c|c|c|c|c|c|c|c|c|c|}
    \hline
        $n$ & 0 & 1 & 2 & 3 & 4 & 5 & 6 & 7 & 8 & 9 & 10 & 11 & 12 & 13 & 14 & 15 & 16\\ \hline\hline
        $F_n$ & 0 & 1 & 1 & 2 & 3 & 5 & 8 & 13 & 21 & 34 & 55 & 89 & 144 & 233 & 377 & 610 & 987 \\ \hline
        $\newF_{8,n}$ & \bluebf{0} & \bluebf{1} & \bluebf{1} & \bluebf{2} & \bluebf{3} & \bluebf{5} & \bluebf{0} & \bluebf{5} & \bluebf{5} & \bluebf{2} & \bluebf{7} & \bluebf{1} & 0 & 1 & 1 & 2 & 3 \\ \hline
    \end{tabular}
\end{center}
It is clear that this modulo 8 sequence repeats at the start of the second occurrence of 0, 1, and hence is periodic of length 12 as shown above in bolded blue. As far back as the $18^{\mathrm{th}}$ century, Lagrange was aware of these reduced Fibonacci sequences and knew that $\Fmn$ is periodic for every $m$. In a ground-breaking paper in 1960, Wall introduced these so-called \textit{Pisano periods}, the length of the repeated Fibonacci sequence pattern modulo $m$ and was the first to prove a number of fundamental results regarding these periods~\cite{Wall1960}. And since then, much research has been done on this topic of the Fibonacci sequence modulo $m$ from a variety of perspectives~\cite{Robinson1963, Fulton-Morris1969, Erhlich1989, Renault1996, Gupta_et_al2012}.

\begin{figure}[H]
\centering
\resizebox{3in}{!}
{
\begin{tikzpicture}
\draw[blue, thick] (0,0) circle (2in);
\foreach \x in {6,12,...,360} {
\draw (\x: 2in) -- (\x: 2.1in);}
\foreach \x/\t in {0/0,6/7,12/3,18/4,24/9,30/5,36/4,42/1,48/3,54/8,60/5,66/3,72/2,78/1,84/1,90/0,96/1,102/9,108/2,114/7,120/5,126/2,132/3,138/9,144/4,150/5,156/9,162/6,168/3,174/3,180/0,186/3,192/7,198/6,204/1,210/5,216/6,222/9,228/7,234/2,240/5,246/7,252/8,258/9,264/9,270/0,276/9,282/1,288/8,294/3,300/5,306/8,312/7,318/1,324/6,330/5,336/1,342/4,348/7,354/7} {
\draw (\x: 2.2in) node{\t};}
\foreach \x in {1,...,6} {
\draw[red] (\x*30: 2in) -- (\x*30+180: 2in);}
\foreach \x/\t in {0/90,1/60,2/30,3/0,4/330,5/300,6/270,7/240,8/210,9/180,10/150,11/120} {
\draw (\x*30: 1in) node[fill=white]{$\text{\t}^\circ$};}
\end{tikzpicture}
}
\vspace{-.15in}
\caption{The Pisano period $\left(\F{n}\right)_{n=0}^{59}$ with values equally spaced}
\label{fig:FibTen_circle}
\end{figure}

It is the goal of this paper to explore some properties of the sequence $\Fmn$ when $m=10$. This modulo 10 sequence has a Pisano period of length 60. The sequence of 60 Fibonacci values $F_0, \ldots, F_{59}$ modulo 10 can be pictorially represented by equally spacing the 60 sequence entries $\F{0}, \ldots, \F{59}$ around a circle clockwise starting with $\F{0}$ placed at the top at 0 degrees as in Figure~\ref{fig:FibTen_circle}. We investigate the subsequences composed of equally spaced terms of $\Ften$; that is, every $r^{\mathrm{th}}$ term of $\Ften$ starting from the term $\F{k}$ for some $0 \leq k \leq 59$. More precisely we consider the subsequences $\Ftensub$, which themselves are necessarily periodic with a period length dividing 60.

\begin{figure}[H]
\centering
    \modcirclenk{25}{3}{3}{0}{11}{.5}
\caption{Subsequence diagram corresponding to $\Ftensubfill{3}{25}$}
\label{fig:Lucas_subsequence}
\end{figure}

The advantage of viewing the parent sequence $\Ften$ in this circular manner in Figure~\ref{fig:FibTen_circle} is that it allows us to study subsequences as convex and non-convex star polygons inscribed within the circle itself. For example, if we set $k=3$ and $r=25$, then we get the star polygon $\{ \frac{12}{5} \}$ corresponding to the subsequence $\Ftensubfill{3}{25}$ shown in Figure~\ref{fig:Lucas_subsequence}. In Appendix~\ref{appendix_A}, we give a visual step-by-step construction of the subsequence diagram in this figure. By looking at the vertex labels on this star polygon (in the order that they occur) starting at the first subsequence term $\F{3} = 2$ towards the top-right of the figure, it is interesting to note that this subsequence has a length 12 period as follows: $(2, 1, 3, 4, 7, 1, 8, 9, 7, 6, 3, 9)$. This is surprisingly the complete period of the Lucas sequence modulo 10. Moreover, it satisfies a Fibonacci-like recurrence:
$\F{k+r(j-1)} + \F{k+rj} \equiv \F{k+r(j+1)} \pmod{10}$. We call subsequences that satisfy this type of recurrence a \textit{forward quasi-Fibonacci subsequence}, and we explore such patterns in Section~\ref{sec:quasi_fibonacci_subsequence}. Equivalently, there is a notion of a reverse version of such quasi-Fibonacci subsequences.

\begin{figure}[hbt!]
\begin{center}
    \modcirclenk{13}{15}{15}{0}{59}{.5}
\end{center}
\vspace{-.25in}
\caption{Subsequence diagram corresponding to $\Ftensubfill{15}{13}$}
\label{fig:complete_subsequence}
\end{figure}

However, the most tantalizing properties occur in subsequences $\Ftensub$ for which the value $r$ lies in the unit group $U(60)$ of integers modulo 60. For example, for $k=15$ and $r=13$, we get star polygon $\{ \frac{60}{13} \}$ corresponding to the subsequence $\Ftensubfill{15}{13}$ shown in Figure~\ref{fig:complete_subsequence}. The miraculous aspect of this subsequence is that its length 60 period is exactly the complete period of the Fibonacci sequence modulo 10. More precisely, despite the repetitive jumps of gap size 13 between each subsequence term, this subsequence $\Ftensubfill{15}{13}$ is exactly the parent sequence $\Ften$. We call sequences whose period is exactly that of the Fibonacci sequence modulo 10, or a cyclic shift of it, a \textit{forward complete Fibonacci subsequence}, and we explore such patterns in Sections~\ref{sec:complete_fibonacci_subsequences}. Equivalently, there is a notion of a reverse version of such complete Fibonacci subsequences.

The breakdown of the paper is as follows. In Section~\ref{sec:prelim}, we give some preliminary definitions and well-known identities used in our proofs. We also define the three types of subsequence diagrams that can occur as we vary the value $r$. In Section~\ref{sec:certain_subsequences}, we introduce the intrigue of studying subsequences $\Ftensub$ of the parent sequence $\Ften$ by examining some specific cases. This section leaves some proofs to the reader and contains avenues of possible further research for the motivated reader. In Sections~\ref{sec:quasi_fibonacci_subsequence} and \ref{sec:complete_fibonacci_subsequences}, we explore quasi-Fibonacci subsequences and complete Fibonacci subsequences, respectively. Lastly in Section~\ref{sec:open questions}, we give a variety of open problems related to our research.


\section{Preliminaries}\label{sec:prelim}

\subsection{Definitions and identities}\label{subsec:definitions_and_identities}

\begin{definition}
The \textit{Fibonacci sequence} $\FibSeq$ and  \textit{Lucas sequence} $\LucSeq$ are defined by the recurrence relations $F_n = F_{n-1} + F_{n-2}$ and $L_n = L_{n-1} + L_{n-2}$, respectively,
with initial conditions $F_0 = 0$, $F_1 = 1$, $L_0 = 2$, and $L_1 = 1$.
\end{definition}

Below we give some well-known identities, which we use in this paper. Proofs of these results can be found in numerous sources (for example, Koshy~\cite{Koshy2001} or Vajda~\cite{Vajda1989}).

\begin{proposition}\label{prop:fundamental_identities}
For all $n,m \in \mathbb{Z}$, the following five identities hold:
\begin{align}
    F_{-n} &= (-1)^{n+1} F_n \label{eq:fund_identity_1}\\
    2\mbox{ divides } F_n &\Leftrightarrow 3 \mbox{  divides } n \label{eq:fund_identity_2a}\\
    5\mbox{ divides } F_n &\Leftrightarrow 5 \mbox{  divides } n \label{eq:fund_identity_2b}\\
    L_n &= F_{n-1} + F_{n+1}\label{eq:fund_identity_3} \\
    F_{n+m} &= F_{n-1} F_m + F_n F_{m+1} \label{eq:fund_identity_4}
\end{align}
\end{proposition}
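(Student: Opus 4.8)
The plan is to derive all five identities from a single master object after first extending both sequences to negative indices by running their recurrences backwards (so $F_{-1}=1$, $F_{-2}=-1$, $L_{-1}=-1$, etc.). Let $Q=\begin{pmatrix}1 & 1\\ 1 & 0\end{pmatrix}$ be the Fibonacci $Q$-matrix. The key lemma I would establish first is that
\[
Q^n=\begin{pmatrix}F_{n+1} & F_n\\ F_n & F_{n-1}\end{pmatrix}\qquad\text{for all }n\in\mathbb{Z}.
\]
For $n\ge 0$ this is a one-line induction using $Q^{n+1}=Q^n Q$ and the recurrence $F_{n+1}=F_n+F_{n-1}$; the base case $n=0$ is the identity matrix, which matches since $F_1=F_{-1}=1$ and $F_0=0$. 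For negative $n$ one runs the induction downward with the backward recurrence $F_{n-1}=F_{n+1}-F_n$, checking that $Q^{-1}=\begin{pmatrix}0 & 1\\ 1 & -1\end{pmatrix}$ agrees with the claimed form at $n=-1$.

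With this lemma in hand, identity \eqref{eq:fund_identity_4} and identity \eqref{eq:fund_identity_1} fall out immediately. Expanding $Q^{n+m}=Q^nQ^m$ and reading off the lower-left entry gives $F_{n+m}=F_nF_{m+1}+F_{n-1}F_m$, which is exactly \eqref{eq:fund_identity_4}. For \eqref{eq:fund_identity_1}, I would use $Q^{-n}=(Q^n)^{-1}$ together with $\det Q=-1$, so $\det Q^n=(-1)^n$; comparing the off-diagonal entry of $Q^{-n}$ with the adjugate formula for $(Q^n)^{-1}$ yields $F_{-n}=(-1)^n(-F_n)=(-1)^{n+1}F_n$. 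Identity \eqref{eq:fund_identity_3} is handled separately and most cleanly by the standard ``same recurrence, two seeds'' argument: the sequence $G_n:=F_{n-1}+F_{n+1}$ satisfies the Fibonacci recurrence (being a shift-sum of sequences that do), and $G_0=F_{-1}+F_1=2=L_0$ while $G_1=F_0+F_2=1=L_1$, so $G_n=L_n$ for all $n\in\mathbb{Z}$ by the two-sided recurrence.

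The two divisibility criteria \eqref{eq:fund_identity_2a} and \eqref{eq:fund_identity_2b} I would treat uniformly. For a modulus $m$, consider the index set $S_m=\{n\in\mathbb{Z}: m\divides F_n\}$. The addition formula \eqref{eq:fund_identity_4} shows $S_m$ is closed under addition, since $m\divides F_a$ and $m\divides F_b$ force $m\divides F_{a+b}=F_{a-1}F_b+F_aF_{b+1}$; and the reflection formula \eqref{eq:fund_identity_1} shows $S_m$ is closed under negation, since $F_{-n}=\pm F_n$. As $0\in S_m$ and (by periodicity, with $F_0=0$ recurring) $S_m$ is a nontrivial subgroup of $(\mathbb{Z},+)$, we get $S_m=\alpha\mathbb{Z}$ where $\alpha=\min(S_m\cap\mathbb{Z}_{>0})$. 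It then remains only to compute these least positive entries: $F_1=F_2=1$ are odd while $F_3=2$ is even, so $\alpha=3$ for $m=2$, giving \eqref{eq:fund_identity_2a}; and $F_1,\dots,F_4\in\{1,1,2,3\}$ avoid $5$ while $F_5=5$, so $\alpha=5$ for $m=5$, giving \eqref{eq:fund_identity_2b}.

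I expect the main obstacle to be precisely this subgroup step for \eqref{eq:fund_identity_2a} and \eqref{eq:fund_identity_2b}: everything else reduces to routine induction or two-seed checking once the $Q$-matrix lemma is available, but the divisibility criteria require the genuine observation that $S_m$ is closed under subtraction, which is not visible from the recurrence directly and must be extracted from the addition and reflection formulas together. (This is also why I would prove \eqref{eq:fund_identity_1} and \eqref{eq:fund_identity_4} before \eqref{eq:fund_identity_2a}–\eqref{eq:fund_identity_2b} rather than in the listed order.)
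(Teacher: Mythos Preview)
Your proof is correct. The paper, however, does not prove this proposition at all: it simply states the identities as well-known and refers the reader to Koshy and Vajda for proofs. So there is no ``paper's own proof'' to compare against here.

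That said, your approach is worth a brief comment. Organizing everything around the $Q$-matrix identity $Q^n=\begin{pmatrix}F_{n+1}&F_n\\F_n&F_{n-1}\end{pmatrix}$ is efficient: \eqref{eq:fund_identity_4} drops out of $Q^{n+m}=Q^nQ^m$, and \eqref{eq:fund_identity_1} from $Q^{-n}=(Q^n)^{-1}$ together with $\det Q^n=(-1)^n$. Your treatment of \eqref{eq:fund_identity_2a} and \eqref{eq:fund_identity_2b} via the subgroup $S_m=\{n:m\mid F_n\}\le(\mathbb{Z},+)$ is the cleanest route and, as you note, genuinely depends on having \eqref{eq:fund_identity_1} and \eqref{eq:fund_identity_4} in hand first. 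One small remark: your parenthetical appeal to periodicity to show $S_m\neq\{0\}$ is unnecessary for the two cases you actually need, since you exhibit $F_3=2$ and $F_5=5$ directly; dropping that aside avoids any circularity with the periodicity argument that appears later in the paper.
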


\begin{definition}
The \textit{Fibonacci sequence modulo} $m$ is the sequence $\Fmn$ where $\newF_{m,n}$ is the least nonnegative residue of the value $F_n$ modulo $m$.
\end{definition}

\begin{example}[The Fibonacci sequence modulo $10$]\label{exam:pisano_10}
Consider the following table of Fibonacci numbers $F_n$ and their corresponding values modulo 10 for $n=0, \ldots, 15$. The usage of the yellow highlighting and red bold font reflect the results of Lemma~\ref{lem:Pisano_divis_by_5} which states that $\F{n} = 5$ if and only if $5$ divides $n$ and 15 does not divide $n$, and moreover, $\F{n}=0$ if and only if 15 divides $n$.
\begin{center}
\begin{tabular}{|c||c|c|c|c|c|c|c|c|c|c|c|c|c|c|c|c|}
\hline
$n$ & 0 & 1 & 2 & 3 & 4 & 5 & 6 & 7 & 8 & 9 & 10 & 11 & 12 & 13 & 14 & 15\\ \hline\hline
$F_n$ & 0 & 1 & 1 & 2 & 3 & 5 & 8 & 13 & 21 & 34 & 55 & 89 & 144 & 233 & 377 & 610\\ \hline
$\F{n}$ & \hl{0} & 1 & 1 & 2 & 3 & \redbf{5} & 8 & 3 & 1 & 4 & \redbf{5} & 9 & 4 & 3 & 7 & \hl{0}\\ \hline
\end{tabular}
\end{center}
Since the $F_n$ values grow large very quickly, in the tables below we give only the values $\F{n}$ for $n=16, \ldots, 60$.
\begin{center}
\begin{tabular}{|c||c|c|c|c|c|c|c|c|c|c|c|c|c|c|c|}
\hline
$n$ & 16 & 17 & 18 & 19 & 20 & 21 & 22 & 23 & 24 & 25 & 26 & 27 & 28 & 29 & 30\\ \hline\hline
$\F{n}$ & 7 & 7 & 4 & 1 & \redbf{5} & 6 & 1 & 7 & 8 & \redbf{5} & 3 & 8 & 1 & 9 & \hl{0}\\ \hline
\end{tabular}
\end{center}

\begin{center}
\begin{tabular}{|c||c|c|c|c|c|c|c|c|c|c|c|c|c|c|c|}
\hline
$n$ & 31 & 32 & 33 & 34 & 35 & 36 & 37 & 38 & 39 & 40 & 41 & 42 & 43 & 44 & 45\\ \hline\hline
$\F{n}$ & 9 & 9 & 8 & 7 & \redbf{5} & 2 & 7 & 9 & 6 & \redbf{5} & 1 & 6 & 7 & 3 & \hl{0}\\ \hline
\end{tabular}
\end{center}

\begin{center}
\begin{tabular}{|c||c|c|c|c|c|c|c|c|c|c|c|c|c|c|c|}
\hline
$n$ & 46 & 47 & 48 & 49 & 50 & 51 & 52 & 53 & 54 & 55 & 56 & 57 & 58 & 59 & 60\\ \hline
$\F{n}$ & 3 & 3 & 6 & 9 & \redbf{5} & 4 & 9 & 3 & 2 & \redbf{5} & 7 & 2 & 9 & 1 & \hl{0}\\ \hline
\end{tabular}
\end{center}
It is clear that $F_{60} \equiv 0 \pmod{10}$ and $F_{61} \equiv 1 \pmod{10}$; moreover, we also have $r=60$ being the smallest positive value $r$ such that $F_r \equiv 0 \pmod{10}$ and $F_{r+1} \equiv 1 \pmod{10}$. Observe that the reduced sequence $\Ften$ is periodic with period length 60. Using Definition~\ref{def:Pisano} to follow, we will say $\pi(10) = 60$.
\end{example}

The previous example shows that modulo 10, the Fibonacci sequence is periodic. This however is not unique to the value 10. The following proposition proves that the sequence $\Fmn$ is periodic for all moduli values $m$.

\begin{proposition}\label{prop:pisano_seq_is_periodic}
The sequence $\Fmn$ is periodic.
\end{proposition}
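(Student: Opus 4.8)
The plan is to use a pigeonhole argument on consecutive pairs of residues, and then to invoke the reversibility of the Fibonacci recurrence modulo $m$ in order to upgrade eventual periodicity to pure periodicity starting from index $0$.

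First I would observe that the entire sequence $\Fmn$ is determined by any one consecutive pair $(\Fm{n}, \Fm{n+1})$, since the recurrence $\Fm{n+1} \equiv \Fm{n} + \Fm{n-1} \pmod{m}$ propagates such a pair forward indefinitely. Each pair is an element of the finite set $\{0, 1, \ldots, m-1\}^2$, which has exactly $m^2$ elements. Considering the $m^2 + 1$ pairs $(\Fm{n}, \Fm{n+1})$ for $n = 0, 1, \ldots, m^2$, the pigeonhole principle forces two of them to coincide: there exist indices $0 \le i < j \le m^2$ with $(\Fm{i}, \Fm{i+1}) = (\Fm{j}, \Fm{j+1})$.

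Next I would push this repetition back to the origin. The key point is that the Fibonacci recurrence is reversible modulo $m$: from $\Fm{n-1} \equiv \Fm{n+1} - \Fm{n} \pmod{m}$ the pair $(\Fm{n-1}, \Fm{n})$ is completely determined by $(\Fm{n}, \Fm{n+1})$. Applying this backward step $i$ times simultaneously to the two coinciding pairs, the left index descends from $i$ to $0$ while the right index descends from $j$ to $j-i$, so I obtain $(\Fm{0}, \Fm{1}) = (\Fm{j-i}, \Fm{j-i+1})$. Setting $p = j - i$, the forward recurrence then yields $\Fm{n} = \Fm{n+p}$ for every $n \ge 0$, which is exactly the assertion that $\Fmn$ is periodic.

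The main obstacle is not the pigeonhole step, which alone gives only eventual periodicity, but rather the justification of pure periodicity from index $0$. This is precisely where reversibility is essential: subtraction modulo $m$ is always well-defined, so the backward recurrence never fails, and this is what distinguishes the Fibonacci sequence modulo $m$ from a general linear recurrence that might be only eventually periodic. The one detail I would state carefully is that the descent is applied exactly $i$ times, ending with the left index at $0$; since $i \ge 0$ and $j > i$, every index encountered remains nonnegative, so each backward step is legitimate.
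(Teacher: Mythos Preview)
Your proof is correct and follows essentially the same pigeonhole-on-consecutive-pairs approach as the paper. In fact your argument is more careful: the paper simply asserts that a repeated pair forces periodicity, whereas you explicitly invoke the reversibility of the recurrence modulo $m$ to upgrade eventual periodicity to pure periodicity from index $0$, a point the paper leaves implicit.
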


\begin{proof}
For ease of notation let $\mathcal{S}$ denote $\Fmn$. Clearly $\mathcal{S}$ is completely determined by any two adjacent terms since $\mathcal{S}$ obeys the Fibonacci recurrence $\newF_{m,n} \equiv \newF_{m,n-1} + \newF_{m,n-2} \pmod{10}$ for all $n \geq 2$. Hence $\mathcal{S}$ is periodic if and only if any two ordered pairs of two adjacent terms coincide. But since all the sequence terms are elements of the set $\{0, 1, \ldots, m-1\}$, there are $m^2$ possible ordered pairs $i,j$ where $0 \leq i,j \leq m-1$. Indeed no two adjacent terms will ever be $0,0$ so there are at most $m^2 - 1$ possible distinct ordered pairs. Hence by the pigeonhole principle no sequence $\mathcal{S}$ can exceed length $2(m^2-1)$ without repeating a pair of adjacent terms. Thus $\mathcal{S}$ is periodic.
\end{proof}

\begin{definition}\label{def:Pisano}
The \textit{Pisano period} of the sequence $\Fmn$ is the smallest  positive integer $r$ such that the congruences $F_r \equiv 0 \pmod{m}$ and $F_{r+1} \equiv 1 \pmod{m}$ hold; that is, $\newF_{m,r}=0$ and $\newF_{m,r+1}=1$. We denote this period $\pi(m)$.
\end{definition}

\subsection{The three types of subsequence diagrams}\label{subsec:three_types_of_diagrams}

For each $k$ and $r$ value, the subsequence $\Ftensub$ of the sequence $\Ften$ corresponds to a unique subsequence diagram in the $\Ften$-circle (see Figure~\ref{fig:FibTen_circle}), and this subsequence diagram depends only only the jump size $r$. The value $r$ determines whether the subsequence diagram is either a convex regular $n$-gon or a non-convex star polygon with $n$ vertices, where $n$ is both the period length and the number of vertices. The value $n$ is easily determined by the value $r$ as Proposition~\ref{prop:computing_number_of_vertices} yields. Before we prove this proposition, we first remind the reader of the definition of a star polygon.

\begin{definition}\label{def:star_polygon}
A \textit{star polygon $\{\frac{p}{q}\}$} with $p,q \in \mathbb{N}$ and $p \geq 3$ is the graph formed by connecting every $q^{\mathrm{th}}$ point out of $p$ regularly spaced points lying on a circumference of a circle. A non-convex polygon arises if $\gcd(p,q)=1$ and $q \neq 1$.  A convex regular $p$-gon arises if $q=1$.
\end{definition}

\begin{example}
Consider the three different subsequence diagrams corresponding to the sequences $\Ftensub$ with starting value $k=3$ and the jump sizes varying over the three values $r=12, 25,$ and $17$, respectively. In Figure~\ref{fig:3_subseq_diagrams}, we draw these diagrams. The subsequence $\Ftensubfill{3}{12}$ corresponds to the star polygon $\{\frac{5}{1}\}$ (that is, a regular 5-gon) in the far left of the figure. The subsequence $\Ftensubfill{3}{25}$ corresponds to the star polygon $\{\frac{12}{5}\}$ in the middle of the figure. Lastly, the subsequence $\Ftensubfill{3}{17}$ corresponds to the star polygon $\{\frac{60}{17}\}$ in the far right of the figure. In Theorem~\ref{thm:3_types_of_diagrams}, we give conditions on the value $r$ that determine the diagram type.
\end{example}

\begin{figure}[h]
\begin{center}
    $\underset{\modcirclenk{12}{3}{3}{0}{4}{.3}}{\textbf{Type 1}}$
    $\underset{\modcirclenk{25}{3}{3}{0}{11}{.3}}{\textbf{Type 2}}$
    $\underset{\modcirclenk{17}{3}{3}{0}{59}{.3}}{\textbf{Type 3}}$
\end{center}
\caption{Subsequence diagrams for $\Ftensubfill{3}{12}$, $\Ftensubfill{3}{25}$, and $\Ftensubfill{3}{17}$}
\label{fig:3_subseq_diagrams}
\end{figure}

\begin{proposition}\label{prop:computing_number_of_vertices}
Given a jump size of $r \in \{1, \ldots, 60\}$, the subsequence diagram corresponding to $\Ftensub$ has $n$ vertices where $n = \frac{\lcm(r,60)}{r}$. Equivalently, we have $n = \frac{60}{\gcd(r,60)}$.
\end{proposition}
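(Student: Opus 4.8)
The plan is to determine how many distinct vertices the subsequence diagram visits before it first returns to its starting point, since that count is exactly the period length $n$ and also the number of vertices of the polygon. The $\Ften$-circle has $60$ positions indexed by residues modulo $60$, and starting from position $k$ the subsequence visits the positions $k, k+r, k+2r, \ldots$ read modulo $60$. Thus the vertices correspond to the cyclic subgroup-coset generated by repeatedly adding $r$ in $\mathbb{Z}/60\mathbb{Z}$, and $n$ is the smallest positive integer $j$ with $k + jr \equiv k \pmod{60}$, equivalently the smallest positive $j$ with $jr \equiv 0 \pmod{60}$.

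First I would observe that the starting value $k$ plays no role: the condition $k + jr \equiv k \pmod{60}$ reduces to $jr \equiv 0 \pmod{60}$ regardless of $k$, which confirms the remark in Subsection~\ref{subsec:three_types_of_diagrams} that the diagram depends only on $r$. Next I would translate ``smallest positive $j$ with $60 \divides jr$'' into lcm language. By the definition of the least common multiple, the multiples of $r$ that are also multiples of $60$ are precisely the multiples of $\lcm(r,60)$; hence the smallest such positive multiple is $\lcm(r,60)$ itself, and the corresponding index is $j = \frac{\lcm(r,60)}{r}$, giving the first claimed formula $n = \frac{\lcm(r,60)}{r}$.

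For the equivalent form, I would invoke the standard identity $\lcm(r,60)\cdot\gcd(r,60) = 60r$. Dividing by $r$ immediately yields
\[
n = \frac{\lcm(r,60)}{r} = \frac{60}{\gcd(r,60)},
\]
which is the second formula. This is a routine substitution once the lcm characterization of $n$ is in place.

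The only genuinely subtle point is justifying that the period length equals the number of distinct vertices, i.e., that the subsequence does not revisit an earlier position before completing a full cycle back to $k$. This follows because the map $j \mapsto k + jr \pmod{60}$ is constant on residue classes of $j$ modulo $n$ and injective on $\{0, 1, \ldots, n-1\}$: if $k + j_1 r \equiv k + j_2 r \pmod{60}$ with $0 \le j_1 < j_2 < n$, then $(j_2 - j_1) r \equiv 0 \pmod{60}$ with $0 < j_2 - j_1 < n$, contradicting the minimality of $n$. Hence the vertices are exactly the $n$ distinct positions, and the diagram is an $n$-vertex polygon as claimed. I expect this injectivity/minimality argument to be the main obstacle only in the sense of being the step that requires care to state precisely; the lcm--gcd manipulations themselves are immediate.
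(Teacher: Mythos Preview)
Your proof is correct and follows essentially the same approach as the paper: reduce to finding the least positive $j$ with $60 \mid jr$, identify $jr = \lcm(r,60)$, and then convert to the $\gcd$ form via $\lcm(r,60)\cdot\gcd(r,60)=60r$. Your added injectivity argument making explicit that the $n$ positions are distinct is a nice touch that the paper leaves implicit.
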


\begin{proof}
Since the subsequence diagram corresponding to $\Ftensub$ has the same shape up to rotation regardless of the value $k$, it suffices to consider the subsequence $\FtensubZeroK$. Let $n$ denote the number of vertices of the subsequence diagram. Then these $n$ vertices are connected by edges in the following sequence of points on the $\Ften$-circle: $\F{0},\F{r}, \F{2r}, \ldots, \F{nr}$, where $n$ is the least positive integer such that $nr$ is a multiple of 60. Hence $nr$ must be the smallest value that is both a multiple of $r$ and $60$, and thus $nr = \lcm(r,60)$. Solving for $n$ we get $n = \frac{\lcm(r,60)}{r}$ as desired. Moreover since $\gcd(r,60) \cdot \lcm(r,60) = 60r$ holds trivially, dividing by $r$ we get
$$\gcd(r,60) \cdot \frac{\lcm(r,60)}{r} = \frac{60 r}{r}.$$
Substituting $n$ for $\frac{\lcm(r,60)}{r}$ in the  latter yields $\gcd(r,60) \cdot n = 60$, and this implies the equivalent formula $n = \frac{60}{\gcd(r,60)}$.
\end{proof}

\begin{corollary}\label{cor:periodicity_of_subsequence}
The subsequence $\Ftensub$ is periodic of length $\frac{60}{\gcd(r,60)}$. In particular, the period of $\Ftensub$ divides the period $\pi(10)$ of $\Ften$, where $\pi(10) = 60$.
\end{corollary}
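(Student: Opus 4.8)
The plan is to derive Corollary~\ref{cor:periodicity_of_subsequence} almost immediately from Proposition~\ref{prop:computing_number_of_vertices}, since that proposition already establishes that the subsequence diagram for $\Ftensub$ has exactly $n = \frac{60}{\gcd(r,60)}$ vertices. First I would observe that the number of distinct vertices on the $\Ften$-circle visited by the subsequence is precisely the period length of $\Ftensub$: the subsequence term $\F{k+rj}$ depends only on the residue of $k+rj$ modulo 60 (because $\Ften$ has period 60 by Example~\ref{exam:pisano_10}), so the subsequence repeats exactly when the index $k+rj$ returns to a previously visited residue class modulo 60. The smallest positive $j$ for which $k + rj \equiv k \pmod{60}$, i.e. $rj \equiv 0 \pmod{60}$, is exactly the $n$ computed in the proposition.

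The key steps, in order, are as follows. I would first note that the sequence of indices $k, k+r, k+2r, \ldots$ reduced modulo 60 is purely periodic with period equal to the additive order of $r$ in $\mathbb{Z}/60\mathbb{Z}$, which is $\frac{60}{\gcd(r,60)}$. Next I would invoke the fact that $\Ften$ is periodic of length exactly 60 to conclude that $\F{k+rj}$ is a function of $(k+rj) \bmod 60$ alone; hence the period of $\Ftensub$ divides the period of the index sequence, namely $\frac{60}{\gcd(r,60)}$. To get equality rather than just divisibility, I would appeal to Proposition~\ref{prop:computing_number_of_vertices}: the $n$ vertices it produces are genuinely the $n$ distinct points of the subsequence diagram, so the period is exactly $n = \frac{60}{\gcd(r,60)}$ and not a proper divisor. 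Finally, since $\gcd(r,60)$ is a positive integer, $\frac{60}{\gcd(r,60)}$ is an integer dividing $60 = \pi(10)$, which gives the ``in particular'' clause directly.

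The main obstacle, modest as it is, lies in the step asserting that the period of $\Ftensub$ equals the number of distinct vertices rather than merely dividing it. One must be careful that distinct index residues modulo 60 can still yield equal $\Ften$-values (indeed many do, since the values are drawn from $\{0,\ldots,9\}$), so the period as a sequence of \emph{values} could a priori be shorter than the orbit length of the indices. The clean resolution is to define the period via the return of the index to its starting residue class: because the underlying parent sequence $\Ften$ has its own period exactly 60 and is determined by adjacent pairs (Proposition~\ref{prop:pisano_seq_is_periodic}), the subsequence of values cannot repeat before the index orbit closes up, so the value-period coincides with the index-orbit length $n$. I would phrase the argument to lean on Proposition~\ref{prop:computing_number_of_vertices} for this count, making the corollary a short and essentially immediate consequence.
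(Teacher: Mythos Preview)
Your core approach matches the paper's: invoke Proposition~\ref{prop:computing_number_of_vertices} to identify $n=\frac{60}{\gcd(r,60)}$ as the index-orbit length, note that $k+rn\equiv k\pmod{60}$ so the subsequence returns to its starting vertex after $n$ steps, and conclude that $\Ftensub$ repeats with period $n$. The paper's own proof does exactly this and stops there; it does not attempt to show that $n$ is the \emph{minimal} period of the value sequence.

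Your final paragraph, however, contains a genuine error. You try to upgrade the conclusion to ``the value-period equals $n$ exactly'' by invoking Proposition~\ref{prop:pisano_seq_is_periodic}, arguing that since $\Ften$ is determined by adjacent pairs, ``the subsequence of values cannot repeat before the index orbit closes up.'' This inference is a non sequitur: the pair-determination property governs the parent sequence at step size $1$, and says nothing about subsequences sampled at step size $r>1$. Concretely, take $k=0$ and $r=30$: then $n=\frac{60}{\gcd(30,60)}=2$, yet the subsequence is $\F{0},\F{30},\F{60},\ldots = 0,0,0,\ldots$, whose minimal period is $1$. Likewise $k=0$, $r=15$ gives $n=4$ but again the constant zero sequence. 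So the minimal value-period can be a proper divisor of $n$, and your proposed ``clean resolution'' is false as stated. The corollary, as the paper proves it, asserts only that $n$ is \emph{a} period (whence the minimal period divides $60=\pi(10)$); you should drop the attempt to force equality with the minimal period and simply read the statement the way the paper does.
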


\begin{proof}
Consider the values $\F{k}, \F{k+r}, \F{k+2r}, \ldots, \F{k+(n-1)r}$ where $n = \frac{60}{\gcd(r,60)}$. We claim that these first $n$ values of the subsequence $\Ftensub$ give a complete period of the subsequence $\Ftensub$. This follows since in the subsequence diagram if we travel from the vertex corresponding to the $\F{k+(n-1)r}$-value to the vertex corresponding to the $\F{k+nr}$-value, then we return back to the starting vertex corresponding to the first sequence term $\F{k}$. Hence $\Ftensub$ is periodic and has length $n = \frac{60}{\gcd(r,60)}$. Clearly $n$ divides the period $\pi(10)=60$.
\end{proof}

\begin{remark}\label{rem:r_versus_n_minus_r}
For a given $r \in \{1, 2, \ldots 59\}$, the subsequence diagrams for $\Ftensubfill{k}{r}$ and $\Ftensubfill{k}{(60-r)}$ will coincide. However, the actual terms of the subsequences will be reversals of each other. For example, the subsequence $\Ftensubfill{3}{12}$ in the far left of Figure~\ref{fig:3_subseq_diagrams} is the sequence repeating the period $(2, 0, 8, 6, 4)$. Whereas,  the subsequence $\Ftensubfill{3}{(60-12)}$ would be the sequence repeating the period $(2, 4, 6, 8, 0, 2)$.
\end{remark}

Before we prove Theorem~\ref{thm:3_types_of_diagrams}, which classifies the three types of subsequence diagrams, we prove a useful lemma used in the proof.

\begin{lemma}\label{lem:mini_gcd_result}
Let $a,b \in \mathbb{N}$. Then the following identity holds:
$$ \gcd\left( \frac{a}{\gcd(a,b)}, \frac{b}{\gcd(a,b)} \right) = 1.$$
\end{lemma}

\begin{proof}
Let $d = \gcd(a,b)$. It suffices to write 1 as an integer linear combination of $\frac{a}{d}$ and $\frac{b}{d}$. Observe that by B\'{e}zout's identity\footnote{Historical remark: Though B\'{e}zout is attributed to this identity in the $18^{\mathrm{th}}$ century, this result was known by the Indian mathematician \={A}ryabha\d{t}a in the $5^{\mathrm{th}}$ century.}, we know there exists integers $x,y \in \mathbb{Z}$ such that $ax + by = d$. Dividing both sides by $d$, we get $\frac{a}{d}x + \frac{b}{d}y = 1$. Hence $\frac{a}{d}$ and $\frac{b}{d}$ are relatively prime and the result holds.
\end{proof}

\begin{remark}
Since the identity $\gcd(r,60) = \gcd(60-r,60)$ always holds, the values $n$ and $q$ in Theorem~\ref{thm:3_types_of_diagrams} will also coincide whether we have a jump size of $r$ or a jump size of $60-r$. So we consider subsequences of the latter two jump sizes to be of the same type. In particular for $r$ equal to 1 or 59, we take the convention of considering the subsequence diagrams for $\Ftensub$ to be of Type 3 even though they appear as convex polygons.
\end{remark}

\begin{theorem}\label{thm:3_types_of_diagrams}
Let $r \in \{1, \ldots, 59\}$. Then for all starting $k$-values, the subsequence $\Ftensub$ corresponds to either a convex star polygon $\{\frac{n}{q}\}$ (that is, $q=1$ so we have a regular $n$-gon) or a non-convex star polygon $\{\frac{n}{q}\}$, where in both cases $n = \frac{60}{\gcd(r,60)}$ and $q = \frac{r}{\gcd(r,60)}$. In particular, we get the following three types of subsequence diagrams dependent on the value $r$:
\begin{itemize}
    \item {\normalfont{\textbf{Type 1:}}} If $r$ or $60-r$ divides $60$, then a regular $n$-gon arises.
    \item {\normalfont{\textbf{Type 2:}}} If neither $r$ nor $60-r$ divides $60$ and $\gcd(r,60) > 1$, then a non-convex star polygon $\{\frac{n}{q}\}$ arises.
    \item {\normalfont{\textbf{Type 3:}}} If $\gcd(r,60) = 1$ and $r$ is neither $1$ nor $59$, then a non-convex star polygon $\{\frac{60}{q}\}$ arises. If $r$ equals $1$ or $59$, then a convex star polygon $\{\frac{60}{q}\}$ arises.
\end{itemize}
\end{theorem}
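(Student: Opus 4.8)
The plan is to reduce the entire classification to one clean geometric observation: the $n$ vertices that the subsequence actually visits are themselves the vertices of a regular $n$-gon inscribed in the $\Ften$-circle, and a single jump of $r$ positions around the full circle becomes a jump of exactly $q = \frac{r}{\gcd(r,60)}$ positions among those $n$ vertices. Once this is established, the star-polygon symbol $\{\frac{n}{q}\}$ falls out directly from Definition~\ref{def:star_polygon}, with the value $n = \frac{60}{\gcd(r,60)}$ already supplied by Proposition~\ref{prop:computing_number_of_vertices}.

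First I would set $d = \gcd(r,60)$ and work with indices in $\mathbb{Z}/60\mathbb{Z}$, recalling from Corollary~\ref{cor:periodicity_of_subsequence} that the positions visited by $\Ftensub$ are $k, k+r, k+2r, \dots \pmod{60}$. The key number-theoretic step is that this set of positions is precisely the coset $k + \langle d\rangle$, since the cyclic subgroup of $\mathbb{Z}/60\mathbb{Z}$ generated by $r$ equals the subgroup generated by $d$. Hence the visited positions are $k, k+d, k+2d, \dots, k+(n-1)d$, which are $n = \frac{60}{d}$ equally spaced points, that is, the vertices of a regular $n$-gon. Relabeling these points $0,1,\dots,n-1$ in cyclic order, a single jump of $r = qd$ positions advances the label by $q = \frac{r}{d}$, so the diagram connects every $q$-th of the $n$ equally spaced points. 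Applying Lemma~\ref{lem:mini_gcd_result} with $a = r$ and $b = 60$ gives $\gcd(n,q) = 1$, so $\{\frac{n}{q}\}$ is a bona fide star polygon, and since $r \le 59 < 60$ we have $1 \le q < n$. This establishes the general claim that $n = \frac{60}{\gcd(r,60)}$ and $q = \frac{r}{\gcd(r,60)}$.

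Then I would finish with the case split on $q$, which by Definition~\ref{def:star_polygon} is exactly the split between convex ($q=1$) and non-convex ($q \neq 1$). For Type 1, note $q = 1 \iff r = d \iff r \mid 60$, giving a regular $n$-gon; the case $(60-r) \mid 60$ is folded in by Remark~\ref{rem:r_versus_n_minus_r}, since the diagram for jump $r$ coincides with that for jump $60-r$ and $\gcd(60-r,60) = \gcd(r,60)$. For Type 2, the hypotheses $r \nmid 60$ and $\gcd(r,60) > 1$ force $q \ge 2$ and $n < 60$, so Definition~\ref{def:star_polygon} yields a non-convex $\{\frac{n}{q}\}$. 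For Type 3, $\gcd(r,60) = 1$ gives $n = 60$ and $q = r$; when $r \neq 1, 59$ we obtain a non-convex $\{\frac{60}{r}\}$, while $r = 1$ gives $q = 1$ directly and $r = 59$ reduces to the $r=1$ diagram via Remark~\ref{rem:r_versus_n_minus_r}, both convex.

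The main obstacle is the structural step in the second paragraph: proving rigorously that the visited vertices form a regular $n$-gon and that the induced step among them is $q = \frac{r}{d}$, since everything afterward is a translation of the hypotheses through Definition~\ref{def:star_polygon}. I would also dispatch a couple of small degenerate cases — in particular, verifying that $n \ge 3$ in Types 2 and 3 so Definition~\ref{def:star_polygon} genuinely applies, the only sub-$3$ value being $n = 2$ at $r = 30$, which satisfies $30 \mid 60$ and so is already classified as Type 1. The remaining casework is routine bookkeeping.
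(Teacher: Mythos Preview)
Your proposal is correct and follows essentially the same architecture as the paper's proof: establish $n = \frac{60}{\gcd(r,60)}$ via Proposition~\ref{prop:computing_number_of_vertices}, derive $q = \frac{r}{\gcd(r,60)}$, invoke Lemma~\ref{lem:mini_gcd_result} to get $\gcd(n,q)=1$, and then split into the three cases. The only notable difference is in how $q$ is obtained: the paper computes it geometrically by observing that consecutive subsequence vertices are $\frac{60}{n}$ apart on the circle and then counting how many such vertices lie in a span of $r$ points, whereas you argue algebraically that the visited positions form the coset $k + \langle d\rangle$ in $\mathbb{Z}/60\mathbb{Z}$ and that a jump of $r = qd$ advances the induced label by $q$. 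Your version is a bit crisper and has the bonus of explicitly flagging the $n\ge 3$ requirement of Definition~\ref{def:star_polygon} (isolating $r=30$ as the lone $n=2$ case), which the paper's proof does not address.
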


\begin{proof}
Let $r \in \{1, \ldots, 59\}$. By Proposition~\ref{prop:computing_number_of_vertices}, the number of vertices of the corresponding subsequence diagram of $\Ftensub$ is $n = \frac{60}{\gcd(r,60)}$. Observe that each edge in the subsequence diagram corresponds to exactly two consecutive terms $\F{k + r j_0}$ and $\F{k + r (j_0 + 1)}$ in the period $\left(\newF_{10,k+rj}\right)_{j=0}^{n-1}$ of the subsequence $\Ftensub$ for some $j_0 \in \{0, 1, \ldots, n-1\}$. Hence the diagram produced is either a convex or non-convex star polygon $\{\frac{n}{q}\}$ formed by connecting every $q^{\mathrm{th}}$ point out of $n$ regularly spaced points on the 60-point $\Ften$-circle. We claim that this $q$ value is $\frac{r}{\gcd(r,60)}$. To show this, first observe that since there are exactly 60 points on the $\Ften$-circle and the subsequence diagram is inscribed inside this circle, the distance between each vertex in the subsequence diagram is $\frac{60}{n}$ points apart in the $\Ften$-circle. Moreover, any given vertex $\F{k + r j_0}$ in the subsequence diagram is adjacent to vertex $\F{k + r (j_0 + 1)}$, and these two adjacent vertices are $r$ units apart in the $\Ften$-circle. Thus there are exactly $\frac{r}{60/n}$ vertices of the subsequence diagram contained in this interval of $r$ points. Hence $q$ equals $\frac{r}{60/n}$. Since $n = \frac{60}{\gcd(r,60)}$, we have the sequence of equalities yielding the desired formula for $q$ as follows:
$$ q = \frac{r}{\sfrac{60}{n}} = \frac{r}{\sfrac{60}{\left(\frac{60}{\gcd(r,60)}\right)}} = \frac{r}{\gcd(r,60)}.$$

Using these two formulas $n = \frac{60}{\gcd(r,60)}$ and $q = \frac{r}{\gcd(r,60)}$, we can classify three types of subsequence diagrams dependent only on the jump size $r$. Recall by Definition~\ref{def:star_polygon}, a star polygon $\{\frac{p}{q}\}$ is non-convex if $\gcd(p,q) = 1$ and $q \neq 1$, and it is convex if $q=1$. There are three cases to consider.

\medskip

\noindent \textbf{Case 1}: If $r$ or $60-r$ divides 60, then $\gcd(r,60) = r$ and hence $q = 1$. Thus the star polygon $\{\frac{n}{1}\}$ produced is convex and is a regular $n$-gon. We call this Type 1.

\medskip

\noindent \textbf{Case 2}: If neither $r$ nor $60-r$ divides $60$ and $\gcd(r,60) > 1$, then $\gcd(r,60) \neq r$ and hence $q \neq 1$. By Lemma~\ref{lem:mini_gcd_result}, we have
$$ \gcd(n,q) = \gcd\left(\frac{60}{\gcd(r,60)}, \frac{r}{\gcd(r,60)}\right) = 1.$$
Thus the star polygon $\{\frac{n}{q}\}$ produced is non-convex and has less than 60 vertices since we have $n = \gcd(r,60) < 60$. We call this Type 2.

\medskip

\noindent \textbf{Case 3}: If $\gcd(r,60) = 1$ and $r$ is neither $1$ nor $59$, then $n = 60$ and $q = r$ and hence $\gcd(n,q)=1$. Also observe that $q \neq 1$ by assumption and hence $q>1$. Thus the star polygon $\{\frac{60}{q}\}$ produced is non-convex and utilizes all 60 points of the $\Ften$-circle as its vertices. We call this Type 3. On the other hand, if $r=1$ or $r=59$, then again $n=60$ and $q=r$ and hence the star polygon $\{\frac{60}{q}\}$ produced is convex and utilizes all 60 points of the $\Ften$-circle as its vertices. We also call this Type 3.
\end{proof}

\begin{remark}
Different $r$ values can yield diagrams of Type 2 with the same number of vertices. For instance, let $k=0$ and consider the $r$ values 9, 21, and 27. Then it is easily computed using the formula $n = \frac{60}{\gcd(r,60)}$ that $n = 20$ for all of these $r$ values. However, when computing the $q$ values given by the formula $q = \frac{r}{\gcd(r,60)}$, we get the corresponding $q$ values 3, 7, and 9, respectively. This yields the corresponding star polygons $\{\frac{20}{3}\}$, $\{\frac{20}{7}\}$, and $\{\frac{20}{9}\}$, respectively, shown in Figure~\ref{fig:3_diagrams_with_same_n_value}:
\begin{figure}[H]
\begin{center}
    $\underset{\modcirclenk{9}{0}{0}{0}{19}{.4}}{\bm{r=9}}$
    $\underset{\modcirclenk{21}{0}{0}{0}{19}{.4}}{\bm{r=21}}$
    $\underset{\modcirclenk{27}{0}{0}{0}{19}{.4}}{\bm{r=27}}$
\end{center}
\caption{Subsequence diagrams for $\Ftensubfill{0}{9}$, $\Ftensubfill{0}{21}$, and $\Ftensubfill{0}{27}$}
\label{fig:3_diagrams_with_same_n_value}
\end{figure}
\end{remark}


\section{Observations on some specific subsequences}\label{sec:certain_subsequences}

In this section we examine some specific subsequences that yield convex subsequence diagrams, namely Type 1 diagrams. We omit detailed formal proofs for many of the assertions in this section as most of these results can be verified informally by brute force verification of the entries in the $\Ften$-circle. Hence we leave formal proofs to the motivated reader.

\subsection{Jump size \texorpdfstring{$r=30$}{r=30}: antipodal points}
\textcolor{white}{Consider the shape produced by a jump size of $r=30$.}
\newline
\begin{minipage}{.4\textwidth}
Given a jump size of $r=30$, we can observe that the sum of the antipodal points on the $\Ften$-circle is either 0 or 10. For instance in Figure~\ref{fig:r=30}, the bold black line represents the period $(0,0)$ of the subsequence $\Ftensubfill{0}{30}$. This sum is 0, whereas the period $(1,9)$ for the neighboring subsequence $\Ftensubfill{1}{30}$ the sum is 10. This phenomena occurs for all $\Ftensubfill{k}{30}$ whenever $k$ is not divisible by 15. This becomes evident when looking at the column values for $\F{n}$ and  $\F{n+30}$ in Table~\ref{table:antipodal_sums}.
\end{minipage}
\begin{minipage}{.6\textwidth}
\begin{figure}[H]
\begin{center}
\modcircleWithBlack{30}{29}{1}{.6}
\end{center}
\caption{$\Ftensubfill{k}{30}$ diagrams}
\label{fig:r=30}
\end{figure}
\end{minipage}

\begin{table}[H]
\begin{center}
\begin{tabular}{|c||c|c|c|c|c|c|c|c|c|c|c|c|c|c|c|c|}
\hline
$n$ & 0 & 1 & 2 & 3 & 4 & 5 & 6 & 7 & 8 & 9 & 10 & 11 & 12 & 13 & 14 & 15\\ \hline\hline
$\F{n}$ & 0 & 1 & 1 & 2 & 3 & 5 & 8 & 3 & 1 & 4 & 5 & 9 & 4 & 3 & 7 & 0\\ \hline
$\F{n+30}$ & 0 & 9 & 9 & 8 & 7 & 5 & 2 & 7 & 9 & 6 & 5 & 1 & 6 & 7 & 3 & 0\\ \hline
\end{tabular}
\end{center}

\begin{center}
\begin{tabular}{|c||c|c|c|c|c|c|c|c|c|c|c|c|c|c|}
\hline
$n$ & 16 & 17 & 18 & 19 & 20 & 21 & 22 & 23 & 24 & 25 & 26 & 27 & 28 & 29\\ \hline\hline
$\F{n}$ & 7 & 7 & 4 & 1 & 5 & 6 & 1 & 7 & 8 & 5 & 3 & 8 & 1 & 9\\ \hline
$\F{n+30}$ & 3 & 3 & 6 & 9 & 5 & 4 & 9 & 3 & 2 & 5 & 7 & 2 & 9 & 1\\ \hline
\end{tabular}
\caption{Sums of antipodal points on the $\Ften$-circle}
\label{table:antipodal_sums}
\end{center}
\end{table}

This observation leads to the main result Theorem~\ref{thm:antipodal_points_sum} of this subsection. Its proof is aided by the next two useful lemmas. This first lemma follows from the well-known results that $3$ divides $n$ if and only if $F_n$ is even, and $5$ divides $n$ if and only if $5$ divides $F_n$.

\begin{lemma}\label{lem:Pisano_divis_by_5}
In the sequence $\Ften$, the following identity holds:
\[
 \F{n} = 
  \begin{cases} 
   5 & \text{if and only if } 5 \divides n \text{ and } 15 \ndivides n,\\
   0 & \text{if and only if } 15 \divides n.
  \end{cases}
\]
\end{lemma}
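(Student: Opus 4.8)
The plan is to reduce the claim to the two divisibility facts quoted just before the lemma, namely identities~\eqref{eq:fund_identity_2a} and \eqref{eq:fund_identity_2b}, by splitting the modulus $10 = 2 \cdot 5$ via the Chinese Remainder Theorem. Since $\gcd(2,5)=1$, the residue $\F{n}$ is completely determined by the pair $(F_n \bmod 2,\, F_n \bmod 5)$. Among the ten residues modulo $10$, the value $5$ is the unique one that is simultaneously odd and divisible by $5$, while $0$ is the unique one that is simultaneously even and divisible by $5$. This dictionary is exactly what converts conditions on parity and on divisibility by $5$ into the single residue values $5$ and $0$.

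For the first case, I would argue that $\F{n} = 5$ holds precisely when $F_n$ is odd and $5 \divides F_n$. By identity~\eqref{eq:fund_identity_2a}, $F_n$ is odd exactly when $3 \ndivides n$, and by identity~\eqref{eq:fund_identity_2b}, $5 \divides F_n$ exactly when $5 \divides n$. Hence $\F{n}=5$ if and only if $5 \divides n$ and $3 \ndivides n$. The final step is to rewrite this pair of conditions in the form stated in the lemma: once $5 \divides n$ is assumed, the condition $3 \ndivides n$ is equivalent to $15 \ndivides n$, because $\gcd(3,5)=1$ forces $15 \divides n$ to be equivalent to ``$3 \divides n$ and $5 \divides n$,'' so under $5 \divides n$ divisibility by $15$ is governed solely by divisibility by $3$.

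For the second case, I would argue analogously that $\F{n}=0$ holds precisely when $F_n$ is even and $5 \divides F_n$. By the same two identities this occurs if and only if $3 \divides n$ and $5 \divides n$, which (again because $\gcd(3,5)=1$) is equivalent to the single condition $15 \divides n$, as desired.

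The argument involves no genuine obstacle, since everything follows mechanically from the Chinese Remainder Theorem together with the two cited identities. The only point demanding any care is the translation between the pair of conditions ``$5 \divides n$ and $3 \ndivides n$'' and the single condition ``$5 \divides n$ and $15 \ndivides n$,'' which rests squarely on the coprimality of $3$ and $5$; I would state this equivalence explicitly to keep the logical passage between the two formulations transparent.
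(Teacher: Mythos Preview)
Your proof is correct and follows exactly the route the paper indicates: the paper does not give a formal proof but simply remarks that the lemma ``follows from the well-known results that $3$ divides $n$ if and only if $F_n$ is even, and $5$ divides $n$ if and only if $5$ divides $F_n$,'' which are precisely identities~\eqref{eq:fund_identity_2a} and~\eqref{eq:fund_identity_2b}. Your Chinese Remainder Theorem argument merely makes explicit the passage from these two divisibility facts to the stated residue values, so there is no meaningful difference in approach.
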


This second lemma reveals an intimate connection between jump sizes in $\Ften$ that are multiples of 15, such as from $\F{n}$ to $\F{n+15j}$ for each $j \geq 0$, and the power sequence $(7^j \pmod{10})_{j=0}^\infty$ which is a periodic sequence of length 4, namely $(7,9,1,3,7,9,1,3,\ldots)$. Its proof relies on Lemma~\ref{lem:Vorobiev}, which is a modulo 10 analogue of Identity~\eqref{eq:fund_identity_4} of Proposition~\ref{prop:fundamental_identities}.

\begin{lemma}\label{lem:powers_of_7}
Set $n \geq 0$. Then for all $j \geq 0$, we have $\F{n+15j}\equiv 7^j \cdot \F{n}\pmod{10}$.
\end{lemma}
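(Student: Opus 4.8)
The plan is to prove the identity $\F{n+15j} \equiv 7^j \cdot \F{n} \pmod{10}$ by induction on $j$, with the base case $j=0$ being trivial since $7^0 = 1$. The entire argument will then reduce to establishing the single-step case $j \to j+1$, which amounts to showing $\F{n+15} \equiv 7 \cdot \F{n} \pmod{10}$ for all $n \geq 0$; from there the inductive step follows by replacing $n$ with $n+15j$ and using $7^{j+1} = 7 \cdot 7^j$.

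To prove the single-step congruence, I would invoke Lemma~\ref{lem:Vorobiev}, described in the excerpt as a modulo 10 analogue of Identity~\eqref{eq:fund_identity_4}, namely $F_{n+m} = F_{n-1}F_m + F_n F_{m+1}$. Setting $m = 15$, this gives
$$ \F{n+15} \equiv \F{n-1} \cdot \F{15} + \F{n} \cdot \F{16} \pmod{10}. $$
From the table in Example~\ref{exam:pisano_10}, we read off $\F{15} = 0$ and $\F{16} = 7$. Substituting these values collapses the first term entirely and yields $\F{n+15} \equiv 7 \cdot \F{n} \pmod{10}$, exactly the single-step claim. (A small bookkeeping point: the identity as stated uses $F_{n-1}$, so one should either restrict to $n \geq 1$ and handle $n=0$ separately by direct computation, or appeal to the extension of the Fibonacci numbers to negative indices via \eqref{eq:fund_identity_1}; since $\F{15}=0$ kills the offending term in any case, this is harmless.)

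The only real subtlety — and the step I expect to require the most care — is ensuring that Lemma~\ref{lem:Vorobiev} is applicable in the exact modular form needed and that the index arithmetic is legitimate. The fundamental identity \eqref{eq:fund_identity_4} holds over $\mathbb{Z}$, so reducing both sides modulo 10 is immediate; the analogue lemma presumably packages precisely this. Once that congruence is in hand, the induction is purely formal, and the appearance of the factor $7$ is entirely explained by the single value $\F{16} = 7$ together with the vanishing $\F{15} = 0$. Thus the periodicity of $(7^j \bmod 10)$ with period $4$ noted in the surrounding text is an automatic consequence, reflecting that $7$ has multiplicative order $4$ modulo $10$.
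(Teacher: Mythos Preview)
Your proposal is correct and follows essentially the same route as the paper: induction on $j$ with the inductive step reduced, via Lemma~\ref{lem:Vorobiev} applied with $m=15$, to the observation that $\F{15}=0$ and $\F{16}=7$. The only cosmetic difference is that you isolate the single-step congruence $\F{n+15}\equiv 7\cdot\F{n}\pmod{10}$ first and then invoke it in the induction, whereas the paper embeds Lemma~\ref{lem:Vorobiev} directly inside the inductive step; the content is identical.
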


\begin{proof}
We induct on $j$. Clearly for $j=0$, we have $\F{n}\equiv 7^0 \cdot \F{n}\pmod{10}$ so the base case holds. Suppose for some $k \geq 0$ that the congruence $\F{n+15k}\equiv 7^k \cdot \F{n}\pmod{10}$ holds. It suffices to show that $\F{n+15(k+1)}\equiv 7^{k+1} \cdot \F{n}\pmod{10}$ holds. Observe the sequence of equalities and congruences
\begin{align*}
    \F{n+15(k+1)} &= \F{(n+15k) + 15}\\
        &\equiv \F{(n+15)-1} \F{15} + \F{n+15k} \F{16} \pmodd{10} &\text{by Lemma~\ref{lem:Vorobiev}}\\
        &\equiv 7 \cdot \F{n+15k} \pmodd{10} &\text{since $\F{15}=0$ and $\F{16}=7$}\\
        &\equiv 7 \cdot \left( 7^k \cdot \F{n} \right) \pmodd{10} &\text{by the induction hypothesis}\\
        &\equiv 7^{k+1} \cdot \F{n} \pmodd{10},
\end{align*}
as desired. We conclude that $\F{n+15j}\equiv 7^j \cdot \F{n}\pmod{10}$ for all $j \geq 0$.
\end{proof}

\begin{theorem}\label{thm:antipodal_points_sum}
In the sequence $\Ften$, the following identity holds:
\[
 \F{n} + \F{n+30} = 
  \begin{cases} 
   0 & \text{if } 15 \mbox{ divides } n,\\
   10 & \text{otherwise}.
  \end{cases}
\]
\end{theorem}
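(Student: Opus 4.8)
The plan is to first establish the congruence $\F{n} + \F{n+30} \equiv 0 \pmod{10}$ for every $n \geq 0$, and then sharpen this to the exact integer value---either $0$ or $10$---by combining the bounds satisfied by least nonnegative residues with the characterization of the zero terms provided by Lemma~\ref{lem:Pisano_divis_by_5}.

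For the congruence, I would write $n + 30 = n + 15 \cdot 2$ and apply Lemma~\ref{lem:powers_of_7} with $j = 2$. This yields $\F{n+30} \equiv 7^2 \cdot \F{n} \equiv 49 \cdot \F{n} \equiv 9 \cdot \F{n} \pmod{10}$. Adding $\F{n}$ to both sides gives $\F{n} + \F{n+30} \equiv 10 \cdot \F{n} \equiv 0 \pmod{10}$, so the sum is always a multiple of $10$, regardless of $n$.

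The crux of the argument is upgrading this congruence to the precise integer value. Since $\F{n}$ and $\F{n+30}$ are least nonnegative residues, each lies in $\{0, 1, \ldots, 9\}$, so their sum lies in $\{0, 1, \ldots, 18\}$. The only multiples of $10$ in this range are $0$ and $10$, and the sum equals $0$ exactly when both summands vanish. I would then split into two cases according to whether $15 \mid n$. If $15 \mid n$, then $15 \mid (n+30)$ as well, so Lemma~\ref{lem:Pisano_divis_by_5} forces $\F{n} = \F{n+30} = 0$, whence the sum is $0$. If $15 \nmid n$, then Lemma~\ref{lem:Pisano_divis_by_5} guarantees $\F{n} \neq 0$, so $\F{n} \geq 1$ and the sum is at least $1$; being a multiple of $10$ bounded above by $18$, it must equal $10$.

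The main obstacle here is conceptual rather than computational: one must resist conflating the congruence, which only determines the sum modulo $10$, with the actual integer value asserted by the theorem. The bridge is the observation that the two summands are genuine residues in $\{0, \ldots, 9\}$, confining their sum to $\{0, \ldots, 18\}$ and leaving only the two candidates $0$ and $10$; Lemma~\ref{lem:Pisano_divis_by_5} then disambiguates these cleanly via the divisibility of $n$ by $15$.
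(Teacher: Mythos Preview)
Your proof is correct and follows essentially the same approach as the paper: both invoke Lemma~\ref{lem:powers_of_7} with $j=2$ to obtain $\F{n+30}\equiv 9\,\F{n}\equiv -\F{n}\pmod{10}$, then use the residue bounds $0\le \F{n},\F{n+30}\le 9$ together with Lemma~\ref{lem:Pisano_divis_by_5} to pin down the exact value of the sum. The only cosmetic difference is that you establish the congruence globally before splitting into cases, whereas the paper dispatches the $15\mid n$ case first and runs the congruence argument only for $15\nmid n$.
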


\begin{proof}
By Lemma~\ref{lem:Pisano_divis_by_5}, the claim clearly holds if 15 divides $n$, so assume 15 does not divide $n$. By Lemma~\ref{lem:powers_of_7}, we know 
$\F{n+15(2)}\equiv 7^2 \cdot \F{n}\pmod{10}$. Hence it follows that $$\F{n+30}\equiv 9 \cdot \F{n} \equiv -1 \cdot \F{n}\pmodd{10}.$$
And thus $\F{n+30} + \F{n}\equiv 0 \pmod{10}$.
Since 15 does not divide $n$ then both $\F{n}$ and $\F{n+30}$ are nonzero, and so 0 $< \F{n}$, $\F{n+30} <$ 10 and 
$\F{n+30} + \F{n}\equiv 0 \pmod{10}$. We conclude that $\F{n+30} + \F{n} = 10$ as desired.
\end{proof}

\subsection{Jump size \texorpdfstring{$r=15$}{r=15}: squares}
\textcolor{white}{Consider the shape produced by a jump size of $r=15$.}
\newline
\begin{minipage}{.4\textwidth}
Given a jump size of $r=15$, we can observe that each subsequence $\Ftensubfill{k}{15}$ is periodic of length 4. For instance in Figure~\ref{fig:r=15}, the bold black square represents the period $(0,0,0,0)$ of the subsequence $\Ftensubfill{0}{15}$. In general, consider the 4-tuples
$$\mathcal{S}_k := \left( \F{k+15j} \; | \; 0 \leq j \leq 3 \right)$$
for $k \in \{0, 1, \ldots, 14\}$. We list the elements of each $\mathcal{S}_k$ tuple below in the order that the $\F{k+15j}$ appear clockwise in the circle. Each subsequence $\Ftensubfill{k}{15}$ is an infinitely repeating sequence of exactly one tuple below or a cyclic shift of one tuple below.
\end{minipage}
\begin{minipage}{.6\textwidth}
\begin{figure}[H]
\begin{center}
\modcircleWithBlack{15}{14}{3}{.65}
\end{center}
\caption{ $\Ftensubfill{k}{15}$ diagrams}
\label{fig:r=15}
\end{figure}
\end{minipage}

\bigskip

\begin{center}
\begin{tabular}{|c||c|}
\hline
$\mathcal{S}_0$ & 0 0 0 0\\ \hline
$\mathcal{S}_1$ & 1 7 9 3\\ \hline
$\mathcal{S}_2$ & 1 7 9 3\\ \hline
$\mathcal{S}_3$ & 2 4 8 6\\ \hline
$\mathcal{S}_4$ & 3 1 7 9\\ \hline
\end{tabular}
\hspace{.75in}
\begin{tabular}{|c||c|}
\hline
$\mathcal{S}_5$ & 5 5 5 5\\ \hline
$\mathcal{S}_6$ & 8 6 2 4\\ \hline
$\mathcal{S}_7$ & 3 1 7 9\\ \hline
$\mathcal{S}_8$ & 1 7 9 3\\ \hline
$\mathcal{S}_9$ & 4 8 6 2\\ \hline
\end{tabular}
\hspace{.75in}
\begin{tabular}{|c||c|}
\hline
$\mathcal{S}_{10}$ & 5 5 5 5\\ \hline
$\mathcal{S}_{11}$ & 9 3 1 7\\ \hline
$\mathcal{S}_{12}$ & 4 8 6 2\\ \hline
$\mathcal{S}_{13}$ & 3 1 7 9\\ \hline
$\mathcal{S}_{14}$ & 7 9 3 1\\ \hline
\end{tabular}
\end{center}
For any $k \in \{0, 1, \ldots, 59\}$, we make the following observations:
\begin{itemize}
\item If $\gcd(k,15)=1$, then the $\mathcal{S}_k$ tuples contain the numbers 1, 7, 9, 3, in that order or some cyclic shift.
\item If $\gcd(k,15)=3$, then the $\mathcal{S}_k$ tuples contain the numbers 2, 4, 8, 6, in that order or some cyclic shift. The fact that these tuple entries are even is clear by the well-known result that $F_n$ is even if and only if 3 divides $n$.
\item If $\gcd(k,15)=5$, then the $\mathcal{S}_k$ tuples contain only 5's. This is clear by Lemma~\ref{lem:Pisano_divis_by_5}.
\item If $\gcd(k,15)=15$, then the $\mathcal{S}_k$ tuples contain only 0's. This is clear by Lemma~\ref{lem:Pisano_divis_by_5}.
\item The sum of the entries in $\mathcal{S}_k$ equals 0 if $\gcd(k,15)=15$ and equals 20 if $\gcd(k,15) \neq 15$.
\item Powers of 0, 2, 5, and 7 modulo 10 yield the $\mathcal{S}_k$ tuples in the following sense:
\begin{itemize}
    \item If $\gcd(k,15)=1$, then $\Ftensubfill{k}{15} = (7^j \pmod{10})_{j=N}^\infty$ for some $N = 0,1,2,3$.
    \item If $\gcd(k,15)=3$, then $\Ftensubfill{k}{15} = (2^j \pmod{10})_{j=N}^\infty$ for some $N = 0,1,2,3$.
    \item If $\gcd(k,15)=5$, then $\Ftensubfill{k}{15} = (5^j \pmod{10})_{j=0}^\infty$.
    \item If $\gcd(k,15)=15$, then $\Ftensubfill{k}{15} = (0^j \pmod{10})_{j=0}^\infty$.
\end{itemize}
\end{itemize}
We leave it to the interested reader to formally prove the assertions above.

\subsection{Jump size \texorpdfstring{$r=12$}{r=12}: pentagons}
\textcolor{white}{Consider the shape produced by a jump size of $r=12$.}
\newline
\begin{minipage}{.4\textwidth}
Given a jump size of $r=15$, we can observe that each subsequence $\Ftensubfill{k}{15}$ is periodic of length 5. For instance in Figure~\ref{fig:r=12}, the bold black pentagon represents the period $(0,4,8,2,6)$ of the subsequence $\Ftensubfill{0}{12}$. In general, consider the 5-tuples
$$\mathcal{P}_k := \left( \F{k+12j} \; | \; 0 \leq j \leq 4 \right)$$
for $k \in \{0, 1, \ldots, 11\}$. We list the elements of each $\mathcal{P}_k$ tuple below in the order that the $\F{k+12j}$ appear clockwise in the circle. Each subsequence $\Ftensubfill{k}{12}$ is an infinitely repeating sequence of exactly one tuple below or a cyclic shift of one tuple below.
\end{minipage}
\begin{minipage}{.6\textwidth}
\begin{figure}[H]
\begin{center}
\modcircleWithBlack{12}{11}{4}{.65}
\end{center}
\caption{ $\Ftensubfill{k}{12}$ diagrams}
\label{fig:r=12}
\end{figure}
\end{minipage}

\bigskip

\begin{center}
\begin{tabular}{|c||c|}
\hline
$\mathcal{P}_0$ & 0 4 8 2 6\\ \hline
$\mathcal{P}_1$ & 1 3 5 7 9\\ \hline
$\mathcal{P}_2$ & 1 7 3 9 5\\ \hline
$\mathcal{P}_3$ & 2 0 8 6 4\\ \hline
$\mathcal{P}_4$ & 3 7 1 5 9\\ \hline
$\mathcal{P}_5$ & 5 7 9 1 3\\ \hline
\end{tabular}
\hspace{.75in}
\begin{tabular}{|c||c|}
\hline
$\mathcal{P}_6$ & 8 4 0 6 2\\ \hline
$\mathcal{P}_7$ & 3 1 9 7 5\\ \hline
$\mathcal{P}_8$ & 1 5 9 3 7\\ \hline
$\mathcal{P}_9$ & 4 6 8 0 2\\ \hline
$\mathcal{P}_{10}$ & 5 1 7 3 9\\ \hline
$\mathcal{P}_{11}$ & 9 7 5 3 1\\ \hline
\end{tabular}
\end{center}

\noindent For any $k \in \{0, 1, \ldots, 59\}$, we make the following observations:
\begin{itemize}
\item If $k \equiv 1 \mbox{ or } 5 \pmod{12}$, then the $\mathcal{P}_k$ tuples contain the numbers 1, 3, 5, 7, 9, in that order or some cyclic shift. If $k \equiv 7 \mbox{ or } 11 \pmod{12}$, then the $\mathcal{P}_k$ tuples contain the numbers 9, 7, 5, 3, 1, in that order or some cyclic shift.
\item If $k \equiv 2 \mbox{ or } 10 \pmod{12}$, then the $\mathcal{P}_k$ tuples contain the numbers 1, 7, 3, 9, 5, in that order or some cyclic shift. If $k \equiv 4 \mbox{ or } 8 \pmod{12}$, then the $\mathcal{P}_k$ tuples contain the numbers 5, 9, 3, 7, 1, in that order or some cyclic shift.
\item If $k \equiv 3 \pmod{12}$, then the $\mathcal{P}_k$ tuples contain the numbers 8, 6, 4, 2, 0, in that order or some cyclic shift. If $k \equiv 9 \pmod{12}$, then the $\mathcal{P}_k$ tuples contain the numbers 0, 2, 4, 6, 8, in that order or some cyclic shift.
\item If $k \equiv 0 \pmod{12}$, then the $\mathcal{P}_k$ tuples contain the numbers 0, 4, 8, 2, 6, in that order or some cyclic shift. If $k \equiv 6 \pmod{12}$, then the $\mathcal{P}_k$ tuples contain the numbers 6, 2, 8, 4, 0, in that order or some cyclic shift.
\item Consequently, the sum of the $\mathcal{P}_k$ values equals 20 if $k \equiv 0, 3, 6, 9 \pmod{12}$ and equals 25 if $k \equiv 1, 2, 4, 5, 7, 8, 10, 11 \pmod{12}$.
\end{itemize}
We leave it to the interested reader to formally prove the assertions above.

\subsection{Jump size \texorpdfstring{$r=5$}{r=5}: dodecagons}\label{subsec:dodecagon}
\textcolor{white}{Consider the shape produced by a jump size of $r=5$.}
\newline
\begin{minipage}{.4\textwidth}
Given a jump size of $r=5$, we can observe that each subsequence $\Ftensubfill{k}{5}$ is periodic of length 12. In Figure~\ref{fig:r=5}, a jump size of $r=5$ highlights a regular 12-gon, or dodecagon. For instance in the figure, the bold black dodecagon represents the period $(0,5,5,0,5,5,0,5,5,0,5,5)$ of the subsequence $\Ftensubfill{0}{5}$. In general, consider the 12-tuples
$$\mathcal{D}_k := \left( \F{k+5j} \; | \; 0 \leq j \leq 11 \right)$$
for $k \in \{0, 1, 2, 3, 4\}$. We list the elements of each $\mathcal{D}_k$ tuple below in the order that the $\F{k+5j}$ appear clockwise in the circle. Each subsequence $\Ftensubfill{k}{5}$ is an infinitely repeating sequence of exactly one tuple below or a cyclic shift of one tuple below.
\end{minipage}
\begin{minipage}{.6\textwidth}
\begin{figure}[H]
\begin{center}
\modcircleWithBlack{5}{4}{11}{.65}
\end{center}
\caption{ $\Ftensubfill{k}{5}$ diagrams}
\label{fig:r=5}
\end{figure}
\end{minipage}

\bigskip

\begin{center}
\begin{tabular}{|c||c|}
\hline
$\mathcal{D}_0$ & 0 5 5 0 5 5 0 5 5 0 5 5 \\ \hline
$\mathcal{D}_1$ & 1 8 9 7 6 3 9 2 1 3 4 7 \\ \hline
$\mathcal{D}_2$ & 1 3 4 7 1 8 9 7 6 3 9 2 \\ \hline
$\mathcal{D}_3$ & 2 1 3 4 7 1 8 9 7 6 3 9 \\ \hline
$\mathcal{D}_4$ & 3 4 7 1 8 9 7 6 3 9 2 1 \\ \hline
\end{tabular}
\end{center}

\noindent For any $k \in \{0, 1, \ldots, 59\}$, we make the following observations:
\begin{itemize}
\item The sum of the entries in $\mathcal{D}_k$ equals 40 if $k \equiv 0 \pmod{5}$ and equals 60 if $k \not\equiv 0 \pmod{5}$.
\item If $k \equiv 0 \pmod{5}$, then the $\mathcal{D}_k$ tuples contain the numbers 0, 5, 5, repeated in that order or repeated in some cyclic shift.
\item If $k \not\equiv 0 \pmod{5}$, then the $\mathcal{D}_k$ tuples are cyclic shifts of the complete period of the Lucas sequence modulo 10; that is, $\left( L_n \pmod{10} \right)_{n=0}^{11} = ( 2, 1, 3, 4, 7, 1, 8, 9, 7, 6, 3, 9)$. Hence for every $k$ not divisible by 5, the subsequence $\Ftensubfill{k}{5}$ is essentially the Lucas sequence modulo 10.
\end{itemize}
We leave it to the interested reader to formally prove the assertions above.

In particular, the twelve ordered elements of each $\mathcal{D}_k$ form a cycle such that the sum modulo 10 of any two adjacent terms yields the next term. That is, the consecutive terms in the cyclic sequence exhibit a \textit{Fibonacci-esque} recurrence relation. We prove why this holds in more generality in the next section (see Theorem~\ref{thm:forward_quasi}).


\section{Quasi-Fibonacci subsequences}\label{sec:quasi_fibonacci_subsequence}

In this section, we examine an interesting class of subsequences $\Ftensub$ that exhibit a \textit{Fibonacci-esque} recurrence relation. We make this precise in the following definition.

\begin{definition}
The subsequence $\Ftensub$ is called a \textit{forward quasi-Fibonacci subsequence} if its terms satisfy the recurrence relation $\F{k+r(j-1)} + \F{k+rj} \equiv \F{k+r(j+1)} \pmod{10}$ for all $j \in \mathbb{Z}$. The subsequence $\Ftensub$ is called a \textit{reverse quasi-Fibonacci subsequence} if its terms satisfy the recurrence relation $\F{k+r(j+1)} + \F{k+rj} \equiv \F{k+r(j-1)} \pmod{10}$ for all $j \in \mathbb{Z}$.
\end{definition}

\begin{example}
Consider the subsequence $\Ftensub$ with $k=3$ and jump size $r=25$. By Theorem~\ref{thm:3_types_of_diagrams}, this subsequence is periodic of length $n = \frac{60}{\gcd(25,60)} = \frac{60}{5} = 12$ and has $q$-value of $q = \frac{25}{\gcd(25,60)} = 5$. Hence it turns out that the corresponding subsequence diagram is the Type~2 star polygon $\{\frac{12}{5}\}$. In Appendix~\ref{appendix_A}, we show step-by-step how this diagram is built in 12 consecutive steps. This subsequence repeats the following 12 period terms:
$$ 2, 1, 3, 4, 7, 1, 8, 9, 7, 6, 3, 9, \ldots $$
It is readily verified that  $\F{k+r(j-1)} + \F{k+rj} \equiv \F{k+r(j+1)} \pmod{10}$ holds for all $j \in \mathbb{Z}$, when $k=3$ and $r=25$, and hence this is a forward quasi-Fibonacci subsequence. As we will prove in Theorem~\ref{thm:forward_quasi}, the fact that $\Ftensubfill{3}{25}$ is a forward quasi-Fibonacci subsequence follows since our $r$-value of 25 satisfies the two conditions: $r \equiv 1 \pmod{4}$ and 3 does not divide $r$.
\end{example}

\begin{remark}\label{rem:r_equals_25_gives_Lucas_sequence}
It worthy to note that the 12 terms that comprise the complete period of the subsequence $\Ftensubfill{3}{25}$ given in the previous example are exactly the terms in the period of the Lucas sequence modulo 10. That is, we have
$$ \Ftensubfill{3}{25} = \left( L_n \pmodd{10} \right)_{n=0}^\infty = ( 2, 1, 3, 4, 7, 1, 8, 9, 7, 6, 3, 9, \ldots ). $$
This same phenomena occurs if we change our jump size from $r=25$ to $r=5$, as we saw in Subsection~\ref{subsec:dodecagon}. That is, we have
$$ \Ftensubfill{3}{5} = \left( L_n \pmodd{10} \right)_{n=0}^\infty = ( 2, 1, 3, 4, 7, 1, 8, 9, 7, 6, 3, 9, \ldots ). $$
\end{remark}

To prove the results in this section, we rely on the well-known identities given in Proposition~\ref{prop:fundamental_identities}. From Identities~\eqref{eq:fund_identity_1} and \eqref{eq:fund_identity_4} in particular, we have the next two results used often in the next two subsections and the proofs of our main results of this section, namely Theorems~\ref{thm:forward_quasi} and \ref{thm:reverse_quasi}.

\begin{lemma}\label{lem:odd_even r}
The following identity holds:
$$\F{-n} \equiv \begin{cases} -\F{n} \pmodd{10} &\mbox{if $n$ is even}, \\
\F{n} \pmodd{10} &\mbox{if $n$ is odd}. \end{cases}$$
\end{lemma}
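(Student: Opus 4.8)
The plan is to prove the statement $\F{-n} \equiv (-1)^{n+1}\F{n} \pmod{10}$ directly from Identity~\eqref{eq:fund_identity_1} of Proposition~\ref{prop:fundamental_identities}, which asserts that $F_{-n} = (-1)^{n+1} F_n$ for all $n \in \mathbb{Z}$. The key observation is that reduction modulo $10$ is compatible with this integer identity: if two integers are equal, then their least nonnegative residues modulo $10$ are congruent. So the entire content is really a case split on the parity of $n$ applied to an identity we are allowed to cite.

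First I would invoke Identity~\eqref{eq:fund_identity_1} to write $F_{-n} = (-1)^{n+1} F_n$ as an equality of integers, and then reduce both sides modulo $10$ to obtain $\F{-n} \equiv (-1)^{n+1} \F{n} \pmod{10}$; here I use that $\F{-n}$ and $\F{n}$ are by definition the residues of $F_{-n}$ and $F_n$. Next I would split on parity. When $n$ is even, $n+1$ is odd, so $(-1)^{n+1} = -1$, giving $\F{-n} \equiv -\F{n} \pmod{10}$. When $n$ is odd, $n+1$ is even, so $(-1)^{n+1} = +1$, giving $\F{-n} \equiv \F{n} \pmod{10}$. These two cases are exactly the two branches of the stated piecewise identity, so the result follows.

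There is essentially no obstacle here; the lemma is a near-immediate modular specialization of a cited identity, and the only thing to be careful about is getting the parity bookkeeping on the exponent $n+1$ correct (it is the parity of $n$, not of $n+1$, that labels the two cases in the statement). If one preferred a self-contained verification not leaning on Identity~\eqref{eq:fund_identity_1}, the alternative would be a short induction extending the Fibonacci recurrence to negative indices, using $F_{n-2} = F_n - F_{n-1}$ to define $F_{-m}$ and checking the sign pattern $F_{-1}=1, F_{-2}=-1, F_{-3}=2, \dots$ against $(-1)^{n+1}F_n$; but since Proposition~\ref{prop:fundamental_identities} is already available, the direct route above is cleaner and is the one I would present.
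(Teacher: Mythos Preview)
Your proposal is correct and follows exactly the paper's own approach: the paper's proof simply states that the result follows immediately from Identity~\eqref{eq:fund_identity_1} of Proposition~\ref{prop:fundamental_identities}, and you have merely spelled out the parity case split that this entails.
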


\begin{proof}
Result follows immediately from Identity~\eqref{eq:fund_identity_1} of Proposition~\ref{prop:fundamental_identities}.
\end{proof}

\begin{lemma}\label{lem:Vorobiev}
For all $m,n \in \mathbb{Z}$, the following identity holds:
$$\F{n+m} \equiv \F{n-1} \F{m} + \F{n} \F{m+1} \pmodd{10}.$$
\end{lemma}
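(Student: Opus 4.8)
The plan is to obtain this congruence simply by reducing the integer identity~\eqref{eq:fund_identity_4} of Proposition~\ref{prop:fundamental_identities} modulo 10. That identity, namely $F_{n+m} = F_{n-1} F_m + F_n F_{m+1}$, is asserted to hold for all $n,m \in \mathbb{Z}$, so it holds in particular for every pair of integers appearing in the statement we wish to prove. The essential observation is that $\F{k}$ is by definition the least nonnegative residue of $F_k$ modulo 10, and hence $\F{k} \equiv F_k \pmod{10}$ for every integer $k$.

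First I would reduce the integer identity modulo 10 to obtain $F_{n+m} \equiv F_{n-1} F_m + F_n F_{m+1} \pmod{10}$. Then, since congruence modulo 10 is preserved under both addition and multiplication, I would replace each Fibonacci number $F_k$ appearing on either side with its residue $\F{k}$. Concretely, $F_{n-1} F_m \equiv \F{n-1}\,\F{m}$ and $F_n F_{m+1} \equiv \F{n}\,\F{m+1}$ modulo 10, so summing these yields the desired right-hand side, while $F_{n+m} \equiv \F{n+m}$ supplies the left-hand side. Chaining the congruences then gives exactly $\F{n+m} \equiv \F{n-1}\,\F{m} + \F{n}\,\F{m+1} \pmodd{10}$.

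There is no genuine obstacle here: the lemma is merely the modulo 10 shadow of a well-known integer identity, and the only point requiring any care is that~\eqref{eq:fund_identity_4} is valid for \emph{all} integers, including negative indices, which is precisely what Proposition~\ref{prop:fundamental_identities} asserts. Securing the identity in this full generality is what allows the congruence to hold for all $m,n \in \mathbb{Z}$ as claimed, and this full range of validity is exactly what is needed for later applications, such as the inductive step in Lemma~\ref{lem:powers_of_7}, where the identity is invoked with indices like $n-1$ that may be small or negative.
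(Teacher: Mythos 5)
Your proof is correct and follows exactly the paper's route: the paper also derives the lemma immediately from Identity~\eqref{eq:fund_identity_4} of Proposition~\ref{prop:fundamental_identities}, reducing it modulo 10. You simply spell out the routine details (that $\F{k} \equiv F_k \pmod{10}$ and that congruences respect sums and products) which the paper leaves implicit.
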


\begin{proof}
Result follows immediately from Identity~\eqref{eq:fund_identity_4} of Proposition~\ref{prop:fundamental_identities}.
\end{proof}


\subsection{Forward quasi-Fibonacci subsequences}

\begin{lemma}\label{lem:ECDC Lemma 1}
If $r\equiv 1 \pmod{4}$ and $3$ does not divide $r$, then $1+ \F{1-r} \equiv \F{r+1} \pmod{10}$.
\end{lemma}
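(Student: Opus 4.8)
The plan is to reduce the claim to a single statement about one Lucas number modulo 10, and then pin down that residue via the Chinese Remainder Theorem, using each of the two hypotheses separately. First I would rewrite the term $\F{1-r}$. Since $r \equiv 1 \pmod 4$, the integer $r$ is odd, and hence $r-1$ is even. Writing $1-r = -(r-1)$ and applying Lemma~\ref{lem:odd_even r} with the even index $r-1$ gives $\F{1-r} \equiv -\F{r-1} \pmod{10}$. Substituting this into the left-hand side, the desired congruence $1 + \F{1-r} \equiv \F{r+1} \pmod{10}$ becomes $1 - \F{r-1} \equiv \F{r+1} \pmod{10}$, or equivalently $\F{r-1} + \F{r+1} \equiv 1 \pmod{10}$.

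Next I would recognize the left side as a Lucas number reduced modulo 10. By Identity~\eqref{eq:fund_identity_3} we have $L_r = F_{r-1} + F_{r+1}$, so reducing modulo 10 gives $\F{r-1} + \F{r+1} \equiv L_r \pmod{10}$. Thus the entire lemma is equivalent to the single assertion $L_r \equiv 1 \pmod{10}$. The key step is to verify this, which I would do by splitting the modulus $10$ into its prime-power factors $2$ and $5$ and spending each hypothesis exactly once. Modulo $2$ the Lucas sequence is periodic of period $3$, cycling through $(0,1,1)$, so $L_r$ is even precisely when $3 \divides r$; the hypothesis $3 \ndivides r$ therefore forces $L_r \equiv 1 \pmod 2$. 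Modulo $5$ the Lucas sequence is periodic of period $4$, cycling through $(2,1,3,4)$, so $L_r \equiv 1 \pmod 5$ precisely when $r \equiv 1 \pmod 4$; the hypothesis $r \equiv 1 \pmod 4$ therefore gives $L_r \equiv 1 \pmod 5$. Combining these two congruences by the Chinese Remainder Theorem yields $L_r \equiv 1 \pmod{10}$, which completes the argument.

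The only mild obstacle is justifying the two small periodicities of the Lucas sequence modulo $2$ and modulo $5$, but both are immediate finite computations straight from the recurrence $L_n = L_{n-1} + L_{n-2}$ together with the initial values $L_0 = 2$ and $L_1 = 1$. What makes this approach clean is that the two hypotheses of the lemma split perfectly across the two prime factors of $10$: the divisibility condition $3 \ndivides r$ governs the residue modulo $2$, while the congruence $r \equiv 1 \pmod 4$ governs the residue modulo $5$. An alternative to the CRT step would be to tabulate the full period-$12$ Lucas sequence modulo $10$ and observe directly that the hypotheses pin $r$ down to $r \equiv 1$ or $5 \pmod{12}$, the two positions where that period equals $1$; I would prefer the CRT route, however, since it explains rather than merely verifies the result.
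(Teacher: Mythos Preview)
Your proof is correct and follows essentially the same route as the paper: both reduce the claim to $L_r \equiv 1 \pmod{10}$ via Lemma~\ref{lem:odd_even r} and Identity~\eqref{eq:fund_identity_3}, then pin down this residue by handling the prime factors $2$ and $5$ separately using the hypotheses $3 \nmid r$ and $r \equiv 1 \pmod 4$, respectively. The only cosmetic difference is that the paper deduces the oddness of $L_r$ from the Fibonacci parity criterion~\eqref{eq:fund_identity_2a} applied to $F_{r-1}$ and $F_{r+1}$ individually and then frames the combination as a set intersection $\{1,3,5,7,9\} \cap \{1,6\}$, whereas you work directly with the Lucas sequence modulo $2$ and package the combination as CRT; the underlying argument is the same.
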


\begin{proof}
Let $r \in \mathbb{N}$ such that $r\equiv 1 \pmod{4}$ and 3 does not divide $r$. Since $r \equiv 1 \pmod{4}$ implies that $r$ is odd and hence $1-r$ is even, it follows that $\F{1-r} \equiv  -\F{r-1} \pmod{10}$ by Lemma~\ref{lem:odd_even r}. Hence it suffices to show $\F{r-1}+\F{r+1} \equiv 1 \pmod{10}$ since that would imply the following sequence of implications:
\begin{align*}
    \F{r-1}+\F{r+1} \equiv 1 \pmodd{10} &\Longrightarrow -\F{1-r} + \F{r+1} \equiv 1 \pmodd{10}\\
    &\Longrightarrow 1 + \F{1-r} \equiv \F{r+1} \pmodd{10}.
\end{align*}
By assumption we know 3 does not divide $r$,  so exactly one of $r-1$ or $r+1$ is divisible by 3. Therefore by Identity~\eqref{eq:fund_identity_2a}, exactly one of $F_{r-1}$ or $F_{r+1}$ is even and consequently $\F{r-1}+\F{r+1}\pmod{10}$ is odd. Thus this sum modulo 10 must be congruent to an element of the set $A=\{1,3,5,7,9\}$. 
Since $F_{r-1}+F_{r+1}=L_r$ by Identity~\eqref{eq:fund_identity_3}, it follows that $\F{r-1}+\F{r+1}\equiv L_r \pmod{10}$.
Observe that the Lucas sequence modulo 5 has a period of length 4 as follows:
$$\bigl(L_n \pmodd{5}\bigr)_{n=0}^\infty = (2,1,3,4,2,1,3,4,\ldots).$$
So for $r \equiv 1 \pmod{4}$, we have $L_r \equiv 1 \pmod{5}$ and thus $L_r$ modulo 10 must be congruent to an element of the set $B = \{1,6\}$. Since $\F{r-1}+\F{r+1}\equiv L_r \pmod{10}$, then $\F{r-1}+\F{r+1}$ modulo 10 lies in $A \cap B = \{1\}$. Therefore $\F{r-1}+\F{r+1} \equiv 1 \pmod{10}$ and the result follows.
\end{proof}

\begin{theorem}\label{thm:forward_quasi}
If $r \equiv 1 \pmod{4}$ and $3$ does not divide $r$, then $\Ftensub$ is a forward quasi-Fibonacci subsequence for all $k \geq 0$. More precisely for all $j \in \mathbb{Z}$, we have
$$\F{k+r(j-1)} + \F{k+rj} \equiv \F{k+r(j+1)} \pmodd{10}.$$
\end{theorem}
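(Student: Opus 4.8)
The plan is to strip the statement of its $j$-dependence and reduce it to a single congruence in one free index, then expand the two shifted terms with the fundamental identity. Writing $N = k + rj$, the claimed recurrence is precisely
\[
\F{N-r} + \F{N} \equiv \F{N+r} \pmodd{10},
\]
and since $F_n$ (hence $\F{n}$) is defined for every integer $n$, it suffices to prove this for all $N \in \mathbb{Z}$; the arithmetic-progression form of $N$ plays no further role. This is the key simplification, since it lets me treat $N$ as a single variable and reduce the whole theorem to a coefficient computation.

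Next I would apply Lemma~\ref{lem:Vorobiev} (the mod-10 version of Identity~\eqref{eq:fund_identity_4}) twice, once with offset $+r$ and once with offset $-r$, to write each of $\F{N+r}$ and $\F{N-r}$ as a combination of the common \emph{base} pair $\F{N-1}$ and $\F{N}$:
\begin{align*}
\F{N+r} &\equiv \F{N-1}\,\F{r} + \F{N}\,\F{r+1} \pmodd{10}, \\
\F{N-r} &\equiv \F{N-1}\,\F{-r} + \F{N}\,\F{1-r} \pmodd{10}.
\end{align*}
Subtracting, the quantity $\F{N-r} + \F{N} - \F{N+r}$ collapses to
\[
\F{N-1}\bigl(\F{-r} - \F{r}\bigr) + \F{N}\bigl(\F{1-r} + 1 - \F{r+1}\bigr) \pmodd{10},
\]
so the goal becomes showing that each parenthesized coefficient vanishes modulo 10, independently of $N$.

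The two hypotheses are exactly what make the coefficients vanish. Since $r \equiv 1 \pmod{4}$ forces $r$ to be odd, Lemma~\ref{lem:odd_even r} gives $\F{-r} \equiv \F{r} \pmod{10}$, so the first coefficient is $0$. The second coefficient is $\F{1-r} + 1 - \F{r+1}$, which is congruent to $0$ modulo 10 precisely by Lemma~\ref{lem:ECDC Lemma 1}, whose hypotheses ($r \equiv 1 \pmod{4}$ and $3 \nmid r$) match ours. Both coefficients therefore vanish, giving $\F{N-r} + \F{N} \equiv \F{N+r} \pmod{10}$ for all $N$, which is the desired recurrence.

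I expect no genuine obstacle: the conceptual work has already been isolated in Lemma~\ref{lem:ECDC Lemma 1}, and everything else is bookkeeping. The two points to handle carefully are the negative-index terms $\F{-r}$ and $\F{1-r}$ produced when invoking Lemma~\ref{lem:Vorobiev} with a negative offset, and applying the expansion consistently so that both shifts decompose over the \emph{same} base pair $\{\F{N-1}, \F{N}\}$ — only then do the coefficients line up to be read off and killed by the two lemmas.
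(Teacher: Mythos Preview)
Your proposal is correct and follows essentially the same route as the paper: both arguments expand the shifted terms via Lemma~\ref{lem:Vorobiev} over the base pair $\F{N-1},\F{N}$ (with $N=k+rj$), then kill the two resulting coefficients using Lemma~\ref{lem:odd_even r} (since $r$ is odd) and Lemma~\ref{lem:ECDC Lemma 1}. The only cosmetic difference is that the paper expands just $\F{N-r}$ and then re-collapses via Lemma~\ref{lem:Vorobiev} into $\F{N+r}$, whereas you expand both shifts and compare coefficients; the content is identical.
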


\begin{proof}
Fix $k \in \mathbb{N}\cup\{0\}$, and let $r \in \mathbb{N}$ such that $r\equiv 1 \pmod{4}$ and 3 does not divide $r$. Observe the sequence of equalities and congruences

\begin{align*}
    \F{k+r(j-1)} &+ \F{k+rj}\\
    &= \F{k+rj} + \F{(k+rj)-r}\\
    &\equiv  \F{k+rj} + \bigl( \F{(k+rj)-1} \F{-r}+ \F{k+rj} \F{-r+1} \bigr) \pmodd{10} &\mbox{by Lemma~\ref{lem:Vorobiev}}\\
    &\equiv \F{(k+rj)-1} \F{-r} +\F{k+rj} (1+\F{1-r}) \pmodd{10}\\
    &\equiv \F{(k+rj)-1} \F{-r} + \F{k+rj} \F{r+1} \pmodd{10}
    &\mbox{by Lemma~\ref{lem:ECDC Lemma 1}}\\
    &\hspace{-.15in}^{(\textcolor{red}{\ast})}\equiv \F{(k+rj)-1} \F{r} + \F{k+rj} \F{r+1} \pmodd{10} &\mbox{by Lemma~\ref{lem:odd_even r}}\\
    &\equiv \F{(k+rj)+r} \pmodd{10} &\mbox{by Lemma~\ref{lem:Vorobiev}}\\
    &\equiv \F{k+r(j+1)} \pmod{10},
\end{align*}
where congruence (\textcolor{red}{$\ast$}) holds since $r \equiv 1 \pmod{4}$ implies $r$ is odd. Hence the result follows.
\end{proof}


\subsection{Reverse quasi-Fibonacci subsequences}

\begin{lemma}\label{lem:ECDC Lemma 1a}
If $r\equiv 3 \pmod{4}$ and $3$ does not divide $r$, then $1 + \F{r+1} \equiv \F{1-r} \pmod{10}$.
\end{lemma}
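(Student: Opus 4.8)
The plan is to mirror the proof of Lemma~\ref{lem:ECDC Lemma 1} almost verbatim, with the only essential change being the target residue of $L_r$ modulo 10. First I would observe that $r \equiv 3 \pmod{4}$ forces $r$ to be odd, so that $1-r$ is even and $r-1$ is even; hence Lemma~\ref{lem:odd_even r} applied to the even index gives $\F{1-r} \equiv -\F{r-1} \pmod{10}$. Substituting this into the desired congruence $1 + \F{r+1} \equiv \F{1-r} \pmod{10}$ shows that it is equivalent to $1 + \F{r+1} + \F{r-1} \equiv 0 \pmod{10}$. By Identity~\eqref{eq:fund_identity_3} we have $\F{r-1} + \F{r+1} \equiv L_r \pmod{10}$, so the entire lemma reduces to establishing the single congruence $L_r \equiv -1 \equiv 9 \pmod{10}$.

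To pin down $L_r$ modulo 10 I would run the same two-pronged residue argument used before. For the modulo-$2$ information: since $3$ does not divide $r$, exactly one of $r-1$ and $r+1$ is divisible by $3$, so by Identity~\eqref{eq:fund_identity_2a} exactly one of $F_{r-1}, F_{r+1}$ is even; thus $L_r = F_{r-1}+F_{r+1}$ is odd, whence $L_r$ modulo 10 lies in $\{1,3,5,7,9\}$. For the modulo-$5$ information I would invoke that $\bigl(L_n \pmodd{5}\bigr)_{n=0}^\infty = (2,1,3,4,2,1,3,4,\ldots)$ has period $4$; since $r \equiv 3 \pmod{4}$, this yields $L_r \equiv 4 \pmod{5}$, so $L_r$ modulo 10 lies in $\{4,9\}$. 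Intersecting the two constraints leaves $L_r \equiv 9 \pmod{10}$, exactly as required, and the chain of equivalences then closes.

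I do not expect any genuine obstacle here, since the argument is completely parallel to Lemma~\ref{lem:ECDC Lemma 1}. The only two points demanding care are the sign in applying Lemma~\ref{lem:odd_even r}: the even index $1-r$ produces the minus sign that flips the target Lucas residue from $1$ in the forward case to $9 \equiv -1$ here; and the reindexing, namely that the hypothesis $r \equiv 3 \pmod{4}$ lands in the $L_r \equiv 4 \pmod 5$ slot of the period-$4$ pattern rather than the $L_r \equiv 1 \pmod 5$ slot. Once those two residue computations are verified, the desired congruence follows immediately.
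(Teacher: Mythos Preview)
Your proposal is correct and follows essentially the same route as the paper's proof: reduce via Lemma~\ref{lem:odd_even r} to showing $\F{r-1}+\F{r+1}\equiv 9\pmod{10}$, then combine the parity argument (giving the odd residues $\{1,3,5,7,9\}$) with the period-$4$ Lucas pattern modulo $5$ (giving $L_r\equiv 4\pmod 5$, hence $\{4,9\}$) and intersect to obtain $9$.
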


\begin{proof}
Let $r \in \mathbb{N}$ such that $r\equiv 3 \pmod{4}$ and 3 does not divide $r$. Since $r \equiv 3 \pmod{4}$ implies that $r$ is odd and hence $1-r$ is even, it follows that $\F{1-r} \equiv  -\F{r-1} \pmod{10}$ by Lemma~\ref{lem:odd_even r}. Hence it suffices to show $\F{r-1}+\F{r+1} \equiv 9 \pmod{10}$ since that would imply the following sequence of implications:
\begin{align*}
    \F{r-1}+\F{r+1} \equiv 9 \pmodd{10} &\Longrightarrow -\bigl(\F{r-1} + \F{r+1} \bigr) \equiv 1 \pmodd{10}\\
    &\Longrightarrow \F{1-r} - \F{r+1} \equiv 1 \pmodd{10}\\
    &\Longrightarrow 1 + \F{r+1} \equiv \F{1-r} \pmodd{10}.
\end{align*}
As in the proof of Lemma~\ref{lem:ECDC Lemma 1}, the sum $\F{r-1}+\F{r+1}$ modulo 10 must be congruent to an element of the set $A=\{1,3,5,7,9\}$, and moreover, $\F{r-1}+\F{r+1}\equiv L_r \pmod{10}$. However this time since $r \equiv 3 \pmod{4}$, we have $L_r \equiv 4 \pmod{5}$ and thus $L_r$ modulo 10 must be congruent to an element of the set $B = \{4,9\}$. Since $\F{r-1}+\F{r+1}\equiv L_r \pmod{10}$, then $\F{r-1}+\F{r+1}$ modulo 10 lies in $A \cap B = \{9\}$. Therefore $\F{r-1}+\F{r+1} \equiv 9 \pmod{10}$ and the result follows.
\end{proof}

\begin{theorem}\label{thm:reverse_quasi}
If $r \equiv 3 \pmod{4}$ and $3$ does not divide $r$, then $\Ftensub$ is a reverse quasi-Fibonacci subsequence for all $k \geq 0$. More precisely for all $j \in \mathbb{Z}$, we have
$$\F{k+r(j+1)} + \F{k+rj} \equiv \F{k+r(j-1)} \pmodd{10}.$$
\end{theorem}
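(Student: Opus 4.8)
The plan is to mirror the proof of Theorem~\ref{thm:forward_quasi} almost verbatim, interchanging the roles of the two outer terms and invoking the reverse congruence Lemma~\ref{lem:ECDC Lemma 1a} in place of Lemma~\ref{lem:ECDC Lemma 1}. I fix $k \geq 0$ and $r \in \mathbb{N}$ with $r \equiv 3 \pmod 4$ and $3 \ndivides r$, and abbreviate $N = k + rj$, so the target congruence becomes $\F{N+r} + \F{N} \equiv \F{N-r} \pmodd{10}$. Establishing this for a general integer $N$ immediately yields the stated identity for all $j \in \mathbb{Z}$ upon resubstituting $N = k+rj$.

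First I would expand the leading term $\F{N+r}$ by Lemma~\ref{lem:Vorobiev} (with $n=N$ and $m=r$) to obtain $\F{N+r} \equiv \F{N-1}\F{r} + \F{N}\F{r+1} \pmodd{10}$. Adding $\F{N}$ and collecting the coefficient of $\F{N}$ rewrites the left-hand side as $\F{N-1}\F{r} + \F{N}(1 + \F{r+1})$. This is precisely where the reverse lemma enters: since $r \equiv 3 \pmod 4$ and $3 \ndivides r$, Lemma~\ref{lem:ECDC Lemma 1a} gives $1 + \F{r+1} \equiv \F{1-r} \pmodd{10}$, so the expression becomes $\F{N-1}\F{r} + \F{N}\F{1-r} \pmodd{10}$.

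The final step folds this back into a single Fibonacci residue via Lemma~\ref{lem:Vorobiev} run in reverse, but now with $m=-r$ rather than $m=r$. This requires replacing $\F{r}$ by $\F{-r}$ in the first summand, and the two agree modulo $10$ precisely because $r$ is odd (a consequence of $r \equiv 3 \pmod 4$), so Lemma~\ref{lem:odd_even r} supplies $\F{r} \equiv \F{-r} \pmodd{10}$. After this substitution the expression reads $\F{N-1}\F{-r} + \F{N}\F{-r+1}$, which by Lemma~\ref{lem:Vorobiev} (with $n=N$, $m=-r$) collapses to $\F{N+(-r)} = \F{N-r}$, as required.

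I do not expect a genuine obstacle here; the computation is essentially the forward derivation read with its two endpoints swapped. The only places demanding care are the sign bookkeeping, namely ensuring the parity substitution is applied to the $\F{r}$ term and that Lemma~\ref{lem:ECDC Lemma 1a} is invoked under its own hypothesis $r \equiv 3 \pmod 4$ (rather than the $r \equiv 1 \pmod 4$ hypothesis governing the forward version). Accordingly, the cleanest exposition is a single display-aligned chain of congruences paralleling the one in the proof of Theorem~\ref{thm:forward_quasi}.
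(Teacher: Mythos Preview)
Your proposal is correct and follows essentially the same route as the paper: expand $\F{N+r}$ via Lemma~\ref{lem:Vorobiev}, collect the $\F{N}$-coefficient into $1+\F{r+1}$, replace it by $\F{1-r}$ via Lemma~\ref{lem:ECDC Lemma 1a}, use oddness of $r$ (Lemma~\ref{lem:odd_even r}) to switch $\F{r}$ to $\F{-r}$, and collapse with Lemma~\ref{lem:Vorobiev} at $m=-r$. The only cosmetic difference is your abbreviation $N=k+rj$, which the paper writes out in full.
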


\begin{proof}
Fix $k \in \mathbb{N}\cup\{0\}$, and let $r \in \mathbb{N}$ such that $r\equiv 3 \pmod{4}$ and 3 does not divide $r$. Observe the sequence of equalities and congruences
\begin{align*}
    \F{k+r(j+1)} &+ \F{k+rj}\\
    &= \F{k+rj} + \F{(k+rj)+r}\\
    &\equiv  \F{k+rj} + \bigl( \F{(k+rj)-1} \F{r}+ \F{k+rj} \F{r+1} \bigr) \pmodd{10} &\mbox{by Lemma~\ref{lem:Vorobiev}}\\
    &\equiv \F{(k+rj)-1} \F{r} +\F{k+rj} (1+\F{r+1}) \pmodd{10}\\
    &\equiv \F{(k+rj)-1} \F{r} + \F{k+rj} \F{1-r} \pmodd{10}
    &\mbox{by Lemma~\ref{lem:ECDC Lemma 1a}}\\
    &\hspace{-.15in}^{(\textcolor{red}{\ast})}\equiv \F{(k+rj)-1} \F{-r} + \F{k+rj} \F{1-r} \pmodd{10} &\mbox{by Lemma~\ref{lem:odd_even r}}\\
    &\equiv \F{(k+rj)-1} \F{-r} + \F{k+rj} \F{-r+1} \pmodd{10}\\
    &\equiv \F{(k+rj)-r} \pmodd{10} &\mbox{by Lemma~\ref{lem:Vorobiev}}\\
    &\equiv \F{k+r(j-1)} \pmod{10},
\end{align*}
where congruence (\textcolor{red}{$\ast$}) holds since $r \equiv 3 \pmod{4}$ implies $r$ is odd. Hence the result follows.
\end{proof}


\section{Complete Fibonacci subsequences}\label{sec:complete_fibonacci_subsequences}

In Remark~\ref{rem:r_equals_25_gives_Lucas_sequence}, we noted that certain subsequences $\Ftensub$ exhibited periods that were exactly the complete length 12 Lucas sequence modulo 10. In this section, we unveil the most tantalizing of the subsequences $\Ftensub$, namely ones that yield periods which are the complete length 60 Fibonacci sequence modulo 10. We make this more precise in the definition below.

\begin{definition}
The subsequence $\Ftensub$ is called a \textit{forward (respectively, reverse) complete Fibonacci subsequence} if it is a forward (respectively, reverse) quasi-Fibonacci sequence with a Type~3 subsequence diagram, in which case $\Ftensub$ coincides with $\FtensubNkrFwd$ (respectively, $\FtensubNkrRev$) for some $N_{k,r} \in \{0, 1, \ldots, 59\}$.
\end{definition}

\begin{example}\label{example:complete_subsequence}
Consider the subsequence $\Ftensub$ with $k=9$ and $r=13$. By Theorem~\ref{thm:3_types_of_diagrams}, this subsequence is periodic of length $n = \frac{60}{\gcd(13,60)} = 60$ and has $q = \frac{13}{\gcd(13,60)} = 13$. Hence the corresponding diagram is the Type~3 star polygon $\{\frac{60}{13}\}$.

In Figure~\ref{fig:r=13_few_edges}, we show the first ten edges of this diagram starting at the first subsequence term, vertex $\F{9}$, and proceeding clockwise jumping every $13$ points. Observe that the 60 terms of the first period $\left(\newF_{10,9+13j}\right)_{j=0}^{59}$ arising from the vertex labels in the subsequence diagram are as follows:
\begin{align*}
&4, 1, 5, 6, 1, 7, 8, 5, 3, 8, 1, 9, 0, 9, 9, 8, 7, 5, 2, 7, 9, 6, 5, 1, 6, 7, 3, 0, 3, 3,\\
&6, 9, 5, 4, 9, 3, 2, 5, 7, 2, 9, 1, \red{0, 1, 1, 2, 3, 5, 8, 3, 1, 4, 5, 9, 4, 3, 7, 0, 7, 7}.
\end{align*}
Surprisingly, we recover the original Fibonacci sequence modulo 10 in its forward direction; more precisely, the subsequence $\Ftensubfill{9}{13}$ is exactly $\FtensubNkrFwd$ for some $N_{k,r} \in \{0, 1, \ldots, 59\}$, and hence $\Ftensubfill{9}{13}$ is a forward complete Fibonacci subsequence.

In red, we highlight the last 18 elements in the period of the subsequence. In these last 18 elements we can see the original parent sequence $\Ften$ beginning. Observe that in the subsequence $\Ftensubfill{9}{13}$, this begins at the index $j=60 - 18 = 42$. Indeed, knowing that the $42^{\mathrm{nd}}$ term in the subsequence is precisely where the original Fibonacci sequence modulo 10 begins is crucial in finding the value $N_{9,13}$, which in this case happens to be the number 18 itself; that is, $\Ftensubfill{9}{13}$ is exactly the sequence $\left(\F{n}\right)_{n=N_{9,13}}^\infty$ where $N_{9,13} = 18$. In Subsection~\ref{subsec:computing_j_naught_and_N_k_r}, we show how to explicitly compute this $N_{k,r}$ value for any complete Fibonacci subsequence $\Ftensub$ via the method given in Algorithm~\ref{algorithm:find_magic_carrot}.
\end{example}

\begin{figure}[H]
\begin{center}
\modcirclenk{13}{9}{9}{0}{9}{.7}
\end{center}
\caption{First ten edges of the $\Ftensubfill{9}{13}$ diagram}
\label{fig:r=13_few_edges}
\end{figure}


\subsection{Group theoretic preliminaries}\label{subsec:group_theory}

Before we give the main results of this section, we first recall some necessary group theory relevant to our Fibonacci setting.

\begin{definition}
The \textit{multiplicative group of units modulo} $m$ is the set of congruence classes in the ring $\mathbb{Z}/m\mathbb{Z}$ represented by integers coprime to $m$. Taking the the least residue class representatives and denoting this set of units by $U(m)$, we have $U(m) := \{ 1 \leq r \leq m-1 \mid \gcd(r,m) = 1 \}$.
The size of $U(m)$ is $\phi(m)$ where $\phi$ is the Euler phi function.
\end{definition}

In this section, we are concerned primarily with the groups $U(10)$ and $U(60)$. The group $U(10)$ has size $\phi(10)=4$, and its elements and corresponding inverses are as follows:
\begin{table}[H]
\begin{center}
\renewcommand{\arraystretch}{1.1}
\begin{tabular}{|c||c|c|c|c|}
    \hline
    $r \in U(10)$ & 1 & 3 & 7 & 9 \\ \hline
    $r^{-1} \in U(10)$ & 1 & 7 & 3 & 9 \\ \hline
\end{tabular}
\caption{The group $U(10)$ and its inverses}
\label{table:group_U_10}
\end{center}
\end{table}

\noindent The group $U(60)$ has size $\phi(60)=16$, and its elements and corresponding inverses are as follows:
\begin{table}[H]
\begin{center}
\renewcommand{\arraystretch}{1.1}
\begin{tabular}{|c||c|c|c|c|c|c|c|c|c|c|c|c|c|c|c|c|}
    \hline
    $r \in U(60)$ & 1 & 7 & 11 & 13 & 17 & 19 & 23 & 29 & 31 & 37 & 41 & 43 & 47 & 49 & 53 & 59 \\ \hline
    $r^{-1} \in U(60)$ & 1 & 43 & 11 & 37 & 53 & 19 & 47 & 29 & 31 & 13 & 41 & 7 & 23 & 49 & 17 & 59 \\ \hline
\end{tabular}
\caption{The group $U(60)$ and its inverses}
\label{table:group_U_60}
\end{center}
\end{table}

By examining Table~\ref{table:group_U_60}, we observe a remarkable connection between each subscript $r$ and the value $\F{r}$, whenever $r$ lies in $U(60)$. We state this tantalizing result in the following lemma.

\begin{lemma}\label{lem:tantalizing_congruence_result}
For all $r \in U(60)$,
\[
 \F{r} \equiv 
  \begin{cases} 
   r \pmodd{10} & \text{if } r \equiv 1 \pmodd{4},\\
   -r \pmodd{10} & \text{if } r \equiv 3 \pmodd{4}.
  \end{cases}
\]
\end{lemma}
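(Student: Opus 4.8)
The plan is to reduce everything modulo $2$ and modulo $5$ separately and recombine via the Chinese Remainder Theorem, since $\F{r}$ is simply $F_r$ reduced modulo $10 = 2 \cdot 5$. The structural fact I would lean on throughout is that every $r \in U(60)$ is coprime to $60$, hence odd (so that $r \equiv 1$ or $3 \pmod 4$) and divisible by neither $3$ nor $5$. The modulo $2$ half is then almost immediate: since $3 \nmid r$, Identity~\eqref{eq:fund_identity_2a} forces $F_r$ to be odd, so $\F{r} \equiv 1 \pmod 2$; and because $r$ is odd we also have $r \equiv -r \equiv 1 \pmod 2$. Thus both candidate right-hand sides, $r$ and $-r$, already agree with $\F{r}$ modulo $2$, independently of $r \bmod 4$.

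The real content lives modulo $5$, and the engine I would use is the congruence
\[
2^{\,n-1} F_n \equiv n \pmod 5 \qquad \text{for all } n \geq 0,
\]
where $2^{\,1-n}$ is read as the appropriate power of the inverse of $2$ modulo $5$. I would establish this by a one-line induction: setting $a_n := n\,2^{\,1-n} \bmod 5$, one checks $a_0 = 0$, $a_1 = 1$, and that $a_n \equiv a_{n-1} + a_{n-2} \pmod 5$ (after clearing the power of $2$ this collapses to $n \equiv 2(n-1) + 4(n-2) \equiv n \pmod 5$), whence $a_n \equiv F_n \pmod 5$ for every $n$. Alternatively one may quote the standard binomial form $2^{\,n-1} F_n = \sum_{k \geq 0}\binom{n}{2k+1} 5^k$ and discard every term with $k \geq 1$.

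Granting this, the sign in the statement falls straight out of the order of $2$ modulo $5$. Since $2$ has order $4$, the exponent $1 - r$ matters only modulo $4$: when $r \equiv 1 \pmod 4$ we get $2^{\,1-r} \equiv 1$, and when $r \equiv 3 \pmod 4$ we get $2^{\,1-r} \equiv 2^{2} \equiv -1 \pmod 5$. Hence $F_r \equiv r \cdot 2^{\,1-r} \equiv r \pmod 5$ in the first case and $F_r \equiv -r \pmod 5$ in the second. Combining with the modulo $2$ computation through the Chinese Remainder Theorem gives $\F{r} \equiv r \pmod{10}$ when $r \equiv 1 \pmod 4$ and $\F{r} \equiv -r \pmod{10}$ when $r \equiv 3 \pmod 4$, exactly as claimed.

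I expect the only genuine obstacle to be isolating and proving the modulo $5$ identity $2^{\,n-1} F_n \equiv n \pmod 5$; once that is in hand, the order-of-$2$ bookkeeping and the recombination step are routine. As a sanity check, and a perfectly valid alternative proof given the computational flavor of the paper, one can instead read the sixteen values $\F{r}$ for $r \in U(60)$ off the tables in Example~\ref{exam:pisano_10} and compare each against $\pm r \bmod 10$; all sixteen match. It is worth noting that coprimality to $5$ is not actually needed for the congruence itself (a multiple of $5$ would satisfy it trivially, as then $\F{r} \equiv 0 \equiv \pm r \pmod 5$); it is coprimality to $2$ and $3$ that does the work, and restricting to $U(60)$ merely aligns the hypothesis with the group governing the Type~3 diagrams studied later.
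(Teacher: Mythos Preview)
Your proof is correct and takes a genuinely different route from the paper. The paper simply verifies the sixteen values $\F{r}$ for $r \in U(60)$ against Table~\ref{table:Fib_values_when_r_in_U_60}; there is no structural argument. By contrast, you split modulo $2$ and $5$ and invoke the identity $2^{\,n-1}F_n \equiv n \pmod 5$ (your inductive verification via $a_n := n\,2^{1-n}$ is clean, and the binomial alternative is standard), then read off the sign from the order of $2$ in $(\mathbb{Z}/5\mathbb{Z})^\times$. This buys considerably more: it \emph{explains} why the residue of $r$ modulo $4$ governs the sign, it shows that coprimality to $5$ is irrelevant (only $2 \nmid r$ and $3 \nmid r$ are used), and it answers precisely the request in Question~6.3 of the paper for ``a more illuminating proof of this very beautiful result.'' The trade-off is that the paper's one-line verification requires no auxiliary identity at all, while yours imports a fact about $F_n \bmod 5$ that, although easy, lies outside the toolbox assembled in Section~\ref{subsec:definitions_and_identities}.
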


\begin{proof}
By brute force verification of the sixteen $\F{r}$ values in Table~\ref{table:Fib_values_when_r_in_U_60}, this result is confirmed.
\end{proof}

\begin{table}[H]
\begin{center}
\renewcommand{\arraystretch}{1.1}
\begin{tabular}{|c||c|c|c|c|c|c|c|c|c|c|c|c|c|c|c|c|}
    \hline
    $r \in U(60)$ & \bluebf{1} & \redbf{7} & \redbf{11} & \bluebf{13} & \bluebf{17} & \redbf{19} & \redbf{23} & 29 & \redbf{31} & \bluebf{37} & \bluebf{41} & \redbf{43} & \redbf{47} & \bluebf{49} & \bluebf{53} & \redbf{59} \\ \hline\hline
    $r \pmod{10}$ & 1 & 7 & 1 & 3 & 7 & 9 & 3 & 9 & 1 & 7 & 1 & 3 & 7 & 9 & 3 & 9 \\ \hline
    $\F{r}$ & 1 & 3 & 9 & 3 & 7 & 1 & 7 & 9 & 9 & 7 & 1 & 7 & 3 & 9 & 3 & 1 \\ \hline
\end{tabular}
\caption{$\F{r}$ values when $r \in U(60)$ with $r \equiv 1\pmod{4}$ in blue and $r \equiv 3\pmod{4}$ in red}
\label{table:Fib_values_when_r_in_U_60}
\end{center}
\end{table}

The next lemma will be helpful when we prove the main results of this section. Like the previous lemma, this lemma can be verified by brute force verification of each of the sixteen $\F{r}$ values; however, we provide a more intuitive proof that explains why this intriguing statement is true.

\begin{lemma}\label{lem:r_in_U_60_then_Fib_val_is_in_U_10}
If $r \in U(60)$, then $\F{r} \in U(10)$.
\end{lemma}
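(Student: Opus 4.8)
The plan is to translate the statement into a divisibility condition and then apply the two fundamental divisibility identities. Since $U(10) = \{1,3,7,9\}$ consists precisely of the residues modulo 10 that are coprime to 10, proving $\F{r} \in U(10)$ amounts to showing that $F_r$ is divisible by neither $2$ nor $5$. The key tools are Identities~\eqref{eq:fund_identity_2a} and \eqref{eq:fund_identity_2b} of Proposition~\ref{prop:fundamental_identities}, which convert divisibility of the Fibonacci number $F_r$ into divisibility of its index $r$.

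First I would unpack the hypothesis. Since $60 = 2^2 \cdot 3 \cdot 5$, the condition $r \in U(60)$, i.e.\ $\gcd(r,60)=1$, forces $3 \ndivides r$ and $5 \ndivides r$ (and also that $r$ is odd, though this is not needed directly). Applying Identity~\eqref{eq:fund_identity_2a}, which says $2 \divides F_r \Leftrightarrow 3 \divides r$, the fact that $3 \ndivides r$ gives that $F_r$ is odd, so $2 \ndivides F_r$. Applying Identity~\eqref{eq:fund_identity_2b}, which says $5 \divides F_r \Leftrightarrow 5 \divides r$, the fact that $5 \ndivides r$ gives $5 \ndivides F_r$.

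Finally I would conclude that $F_r$ is coprime to both $2$ and $5$, hence coprime to $10$; therefore its least nonnegative residue $\F{r}$ lies in $\{1,3,7,9\} = U(10)$. There is no genuine obstacle in this argument — it is essentially a one-line deduction once the fundamental identities are in hand, which is exactly what makes it more illuminating than the brute-force check. The only minor point to note is that coprimality of $F_r$ to $10$ transfers to its residue, which is immediate since $\gcd(F_r,10) = \gcd(\F{r},10)$.
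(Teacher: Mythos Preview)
Your proof is correct and follows essentially the same approach as the paper: both arguments observe that $r\in U(60)$ forces $3\ndivides r$ and $5\ndivides r$, then invoke Identities~\eqref{eq:fund_identity_2a} and \eqref{eq:fund_identity_2b} to conclude that neither $2$ nor $5$ divides $F_r$, whence $\F{r}\in\{1,3,7,9\}=U(10)$.
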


\begin{proof}
Let $r \in U(60)$. Then certainly neither 3 nor 5 divides $r$. Identities~\eqref{eq:fund_identity_2a} and \eqref{eq:fund_identity_2b}, respectively, of Proposition~\ref{prop:fundamental_identities} imply the following two equivalencies:
\begin{align*}
    \mbox{3 divides $r$} &\Longleftrightarrow \mbox{2 divides $\F{r}$}\\
    \mbox{5 divides $r$} &\Longleftrightarrow \mbox{5 divides $\F{r}$}.
\end{align*}
Therefore if $r \in U(60)$, then neither 2 nor 5 can divide $\F{r}$. Thus $\F{r}$ must be one of 1, 3, 7, or 9. Hence $\F{r} \in U(10)$ as desired.
\end{proof}

The fact that $U(10)$ is a multiplicative group, and hence closed under inverses, allows one to prove the following useful lemma.

\begin{lemma}\label{lem:r_in_U_60_then_Fib_val_has_an_inverse}
Let $r \in U(60)$. Then $\F{M} \F{r} \equiv 1 \pmod{10}$ for some $M \in \mathbb{Z}$. In particular, these specific choices of $M$ work:
\[
M =
\begin{cases}
    0 \pm 1 & \text{if } \F{r} = 1,\\
    15 \pm 1 & \text{if } \F{r} = 3,\\
    45 \pm 1 & \text{if } \F{r} = 7,\\
    30 \pm 1 & \text{if } \F{r} =9.
\end{cases}
\]
\end{lemma}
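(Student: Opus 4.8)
The plan is to split the claim into an existence statement and a verification of the explicit values of $M$. The existence of some $M$ with $\F{M}\F{r} \equiv 1 \pmod{10}$ is immediate from the group structure: by Lemma~\ref{lem:r_in_U_60_then_Fib_val_is_in_U_10} we have $\F{r} \in U(10)$, and since $U(10)$ is a finite multiplicative group it contains the inverse $(\F{r})^{-1}$. Reading the inverses off Table~\ref{table:group_U_10}, the four possibilities are $1^{-1} = 1$, $3^{-1} = 7$, $7^{-1} = 3$, and $9^{-1} = 9$. It therefore suffices to exhibit, for each value $\F{r} \in \{1,3,7,9\}$, an explicit index $M$ at which $\F{M}$ realizes the required inverse.

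First I would compute the Fibonacci residues at the eight indices $M = 15k \pm 1$ for $k \in \{0,1,2,3\}$. These are precisely the indices flanking the zeros of the sequence, which sit at the multiples of $15$ by Lemma~\ref{lem:Pisano_divis_by_5}. For the indices $M = 15k+1$ with $k \geq 0$, Lemma~\ref{lem:powers_of_7} gives $\F{15k+1} \equiv 7^k \cdot \F{1} \equiv 7^k \pmod{10}$ since $\F{1} = 1$. For the indices $M = 15k-1$ I would treat $k \geq 1$ by rewriting $15k - 1 = 14 + 15(k-1)$ and applying Lemma~\ref{lem:powers_of_7} to get $\F{15k-1} \equiv 7^{k-1} \cdot \F{14} \equiv 7^{k-1} \cdot 7 = 7^k \pmod{10}$, using $\F{14} = 7$; the lone remaining case $k = 0$, $M = -1$ is handled by Identity~\eqref{eq:fund_identity_1}, which yields $\F{-1} = \F{1} = 1 = 7^0$. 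In every case the residue is $7^k \bmod 10$, so both signs of $M$ agree and the common value runs through $1, 7, 9, 3$ as $k = 0,1,2,3$.

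The final step is to match each possible $\F{r}$ to the block $15k \pm 1$ whose residue equals $(\F{r})^{-1}$. When $\F{r} = 1$ the needed inverse is $1 = 7^0$, so $M = 0 \pm 1$; when $\F{r} = 3$ the inverse is $7 = 7^1$, so $M = 15 \pm 1$; when $\F{r} = 9$ the inverse is $9 = 7^2$, so $M = 30 \pm 1$; and when $\F{r} = 7$ the inverse is $3 = 7^3$, so $M = 45 \pm 1$. This reproduces exactly the four cases in the statement, and in each case $\F{M}\F{r} \equiv (\F{r})^{-1}\F{r} \equiv 1 \pmod{10}$.

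There is no genuine obstacle here; once the group-theoretic existence is noted, the lemma reduces to bookkeeping. The only points requiring care are the appearance of the negative index $M = -1$ in the case $\F{r} = 1$, where I must invoke Identity~\eqref{eq:fund_identity_1} rather than treat $-1$ as an out-of-range subscript, and the fact that Lemma~\ref{lem:powers_of_7} is stated only for nonnegative base indices, which forces the reindexing $15k - 1 = 14 + 15(k-1)$ in the $M = 15k-1$ branch. It is precisely this $\pm$ symmetry about each zero — itself a consequence of $\F{-1} = \F{1}$ propagated by Lemma~\ref{lem:powers_of_7} — that allows both signs of $M$ to work simultaneously.
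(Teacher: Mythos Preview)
Your proof is correct. The approach differs from the paper's in a useful way: the paper simply tabulates the values $\F{M_i \pm 1}$ for $M_i = 15i$, $i \in \{0,1,2,3\}$, by reading them off the period in Example~\ref{exam:pisano_10} and then checks by hand that each is the inverse of the corresponding $\F{r}$. You instead invoke Lemma~\ref{lem:powers_of_7} to prove uniformly that $\F{15k \pm 1} \equiv 7^k \pmod{10}$, with a separate appeal to Identity~\eqref{eq:fund_identity_1} for the negative index $M=-1$. This buys you an explanation of the $\pm$ symmetry about each zero rather than a bare verification, and it ties the lemma back to the structural result Lemma~\ref{lem:powers_of_7}; the paper's tabulation is shorter but less illuminating.
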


\begin{proof}
Let $r \in U(60)$. By Lemma~\ref{lem:r_in_U_60_then_Fib_val_is_in_U_10}, we know $\F{r} \in \{1,3,7,9\}$. This set is the group $U(10)$, and hence every element has a multiplicative inverse. Therefore, many choices of $M \in \mathbb{Z}$ exists such that $\F{M} \F{r} \equiv 1 \pmod{10}$ since the values 1, 3, 7, and 9 all appear in the $\Ften$ sequence. To find the particular choices of $M$ given in the lemma statement, set $M_i := 15i$ and $r_i := 3^i \pmod{10}$ for $i \in \{0,1,2,3\}$. Then we get the following table of values:
\begin{center}
    \begin{tabular}{|c||c|c|c|c|}
    \hline
        $i$ & 0 & 1 & 2 & 3 \\ \hline \hline
        $M_i$ & 0 & 15 & 30 & 45 \\ \hline
        $r_i$ & 1 & 3 & 9 & 7 \\ \hline
        $\F{M_i \pm 1}$ & 1 & 7 & 9 & 3\\ \hline
    \end{tabular}
\end{center}
One easily verifies for each $i \in \{0,1,2,3\}$ that if $\F{r} = r_i$, then $\F{M_i \pm 1} \F{r} \equiv 1 \pmod{10}$, and the result follows.
\end{proof}


\subsection{Forward and reverse complete Fibonacci subsequences}\label{subsec:forward_and_reverse_complete_Fib_subsequence}

To prove the main results of this subsection, we need the next two lemmas. This first one analyzes the arithmetic progression of subscripts in the subsequence $\Ftensub$ to provide an argument why there are exactly four equally spaced zeros in the length 60 period of the subsequence $\Ftensub$ when $r \in U(60)$.

\begin{lemma}\label{lem:four_zeros}
If $r \in U(60)$, then the arithmetic progression sequence $\left( k + rj \right)_{j=0}^{59}$ contains exactly four terms divisible by $15$. Moreover, these four terms are equally spaced apart in consecutive jumps of size $15$. In particular, each period of $\Ftensub$ contains exactly four equally spaced zeros.
\end{lemma}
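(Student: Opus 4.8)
The plan is to analyze which terms of the arithmetic progression $(k+rj)_{j=0}^{59}$ are divisible by $15$ by reducing the problem modulo $15$. Since $r \in U(60)$ means $\gcd(r,60)=1$, in particular $\gcd(r,15)=1$, so $r$ is a unit modulo $15$. This is the crucial observation: multiplication by $r$ is a bijection on $\mathbb{Z}/15\mathbb{Z}$. First I would note that $15 \divides (k+rj)$ if and only if $rj \equiv -k \pmod{15}$, and because $r$ is invertible modulo $15$ this congruence has a unique solution $j \equiv -k r^{-1} \pmod{15}$ for the residue of $j$ modulo $15$.

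Next I would count how many $j$ in the full range $\{0, 1, \ldots, 59\}$ satisfy this single congruence class modulo $15$. Since the range has exactly $60 = 4 \cdot 15$ consecutive integers, each residue class modulo $15$ is hit exactly four times. Hence there are exactly four values of $j$ in $\{0, \ldots, 59\}$ for which $15 \divides (k+rj)$, and by construction these four $j$-values are $j_0, j_0+15, j_0+30, j_0+45$ for the least nonnegative solution $j_0$. This establishes both the count of four and the equal spacing of $15$ in the index $j$.

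To translate this into the statement about zeros in $\Ftensub$, I would invoke Lemma~\ref{lem:Pisano_divis_by_5}, which states that $\F{n}=0$ if and only if $15 \divides n$. Thus the four indices just found are exactly the positions where the subsequence term $\F{k+rj}$ equals $0$, and since these occur at $j \in \{j_0, j_0+15, j_0+30, j_0+45\}$, the four zeros in each length-$60$ period are equally spaced at consecutive jumps of $15$ in the subsequence index, as claimed.

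I expect the main (though modest) obstacle to be phrasing the equal-spacing conclusion carefully: one must be clear that the spacing of $15$ refers to the subsequence index $j$, not to the original index $n=k+rj$, and that the four solutions genuinely land in the range $\{0,\ldots,59\}$ rather than wrapping around. Since the period of $\Ftensub$ has length $60$ by Corollary~\ref{cor:periodicity_of_subsequence} (as $\gcd(r,60)=1$ forces $n=60$), the four equally spaced zeros partition one full period into four arcs of length $15$, which is precisely the spacing needed for the complete-Fibonacci-subsequence arguments to follow.
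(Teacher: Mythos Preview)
Your proposal is correct and follows essentially the same approach as the paper: both arguments hinge on $\gcd(r,15)=1$, identify a unique residue $j_0 \pmod{15}$ with $15 \mid k+rj_0$, observe that the four solutions in $\{0,\ldots,59\}$ are $j_0, j_0+15, j_0+30, j_0+45$, and then invoke Lemma~\ref{lem:Pisano_divis_by_5}. The only cosmetic difference is that you solve the congruence $rj\equiv -k \pmod{15}$ directly via $r^{-1}$, whereas the paper argues distinctness of the first fifteen residues and applies the pigeonhole principle to locate $j_0$.
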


\begin{proof}
Let $r \in U(60)$ and consider $\left( k + rj \right)_{j=0}^{59}$. We will show that 15 divides the term $k + r j_0$ for some $j_0 \in \{0, 1, \ldots 14\}$, and that the remaining three terms divisible by 15 are $k + r (j_0 + 15i)$ for $i = 1,2,3$. We claim the first 15 terms give distinct residue classes modulo 15. To that end, suppose $k + r a \equiv k + r b \pmod{15}$ for some $0 \leq a \leq b \leq 14$ and hence we have $r a \equiv r b \pmod{15}$. But $r \in U(60)$ implies $\gcd(r,15)=1$ and so $a \equiv b \pmod{15}$. So by the pigeonhole principle, exactly one of the first 15 terms is congruent to 0 modulo 15; that is, 15 divides the term $k + r j_0$ for exactly one value $j_0 \in \{0, 1, \ldots 14\}$. Now suppose 15 also divides $k + r (j_0 + t)$ for some $t \geq 1$. Then 15 divides the difference $[k + r (j_0 + t)] - [k + r j_0]$, which is $rt$. But $r \in U(60)$ implies  15 does not divide $r$, and hence 15 must divide $t$. Thus the four terms in $\left( k + rj \right)_{j=0}^{59}$ divisible by 15 are $k + r (j_0 + 15i)$ for $i = 0,1,2,3$.

Consequently by Lemma~\ref{lem:Pisano_divis_by_5}, the terms $\F{k + r (j_0 + 15i)}$ for $i = 0,1,2,3$ are the only terms in the first period of the subsequence $\Ftensub$ equal to 0, and in particular, each period contains exactly four equally spaced zeros.
\end{proof}

\begin{corollary}\label{cor:zeros_of_subsequence_are_0_15_30_and_45}
If $r \in U(60)$, then in the arithmetic progression sequence $\left( k + rj \right)_{j=0}^{59}$ the four terms divisible by $15$ are congruent modulo $60$ to the four values $0, 15, 30, 45$, in some order.
\end{corollary}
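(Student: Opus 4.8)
The plan is to build directly on Lemma~\ref{lem:four_zeros}, which already pins down the four terms divisible by $15$ as $k + r(j_0 + 15i)$ for $i = 0, 1, 2, 3$, where $j_0 \in \{0, \ldots, 14\}$ is the unique index with $15 \mid k + r j_0$. Rather than tracking the zeros, I would reduce these four terms modulo $60$. Each is a multiple of $15$, so each lies in the order-four cyclic subgroup $H = \{0, 15, 30, 45\}$ of $\mathbb{Z}/60\mathbb{Z}$. It then suffices to show the four terms are pairwise distinct modulo $60$, since $|H| = 4$ would force them to exhaust $H$, which is exactly the assertion.

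Next I would analyze the common difference. Consecutive terms differ by $15r$, so modulo $60$ the four terms are $a,\ a + 15r,\ a + 30r,\ a + 45r$, where $a \equiv k + r j_0 \pmod{60}$ lies in $H$. Because $r \in U(60)$ forces $\gcd(r,60)=1$, the value $r$ is in particular coprime to $4$. Writing $15r \equiv 15\,(r \bmod 4) \pmod{60}$ (valid since $15 \cdot 4 = 60$) then gives $15r \equiv 15 \pmod{60}$ when $r \equiv 1 \pmod 4$ and $15r \equiv 45 \pmod{60}$ when $r \equiv 3 \pmod 4$. In either case the step $15r \bmod 60$ equals $15$ or $45$, and each of these generates $H$.

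Finally I would invoke the group-theoretic conclusion. Since $15r \bmod 60$ generates the cyclic group $H$ of order $4$, the map $i \mapsto i \cdot (15r)$ is a bijection from $\{0,1,2,3\}$ onto $H$; translating a complete listing of $H$ by the constant $a \in H$ merely permutes $H$, so $\{\,a + i\cdot 15r : i = 0,1,2,3\,\} = H = \{0, 15, 30, 45\}$, as desired. Distinctness, which is really all that is needed, follows from the cancellation $15 r i \equiv 15 r i' \pmod{60} \iff i \equiv i' \pmod 4$ (legitimate because $\gcd(r,4)=1$), so the four indices $i \in \{0,1,2,3\}$ yield four distinct residues.

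I would flag the single conceptual step — recognizing that the jump $15r$ reduces modulo $60$ to a generator of the multiples-of-$15$ subgroup $H$ — as the heart of the argument; everything else is bookkeeping that leans on $\gcd(r,4)=1$. The only subtle point to state carefully is the distinctness of the four residues, which is what upgrades ``four multiples of $15$ in $\mathbb{Z}/60\mathbb{Z}$'' to ``exactly $\{0,15,30,45\}$''.
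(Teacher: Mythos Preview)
Your proof is correct, but it takes a different route from the paper's. The paper argues via Theorem~\ref{thm:3_types_of_diagrams}: since $r \in U(60)$, the subsequence diagram is Type~3, meaning the period $\FtensubCompletePeriod$ touches \emph{every} vertex of the $\Ften$-circle; in particular it hits the four vertices $\F{0}, \F{15}, \F{30}, \F{45}$, so the four subscripts divisible by $15$ must be congruent to $0, 15, 30, 45$ modulo $60$. In essence the paper uses that $j \mapsto k + rj \pmod{60}$ is a bijection on $\mathbb{Z}/60\mathbb{Z}$ (which is what ``Type~3'' encodes) and then reads off the result.

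Your argument, by contrast, never leaves the four terms themselves: you observe that their residues modulo $60$ all lie in the order-four subgroup $H = \{0,15,30,45\}$ and that the common step $15r$ reduces to a generator of $H$ (since $\gcd(r,4)=1$ forces $15r \equiv 15$ or $45 \pmod{60}$), so the four residues are distinct and hence exhaust $H$. This is more self-contained in that it avoids the Type~3 classification entirely, needing only Lemma~\ref{lem:four_zeros} plus a short cyclic-group argument. The paper's version is terser because the surjectivity onto $\mathbb{Z}/60\mathbb{Z}$ was already packaged into Theorem~\ref{thm:3_types_of_diagrams}; yours trades that citation for an explicit computation inside $H$, which has the side benefit of making transparent exactly where the hypothesis $\gcd(r,4)=1$ enters.
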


\begin{proof}
When $r \in U(60)$, the subsequence diagram corresponding to $\Ftensub$ is Type 3 by Theorem~\ref{thm:3_types_of_diagrams}, and hence the period $\FtensubCompletePeriod$ uses every vertex in the $\Ften$-circle. In particular, the subsequence diagram for $\Ftensub$ includes vertices $\F{15i}$ for each $i \in \{0,1,2,3\}$. Hence the four subscripts $k + r (j_0 + 15i)$ for $i = 0,1,2,3$ divisible by 15 are congruent modulo 60 to the four values in the set $\{0, 15, 30, 45\}$.
\end{proof}

\begin{lemma}\label{lem:zero_followed_by_a_one}
If $r \in U(60)$, then there exists a $0$ followed by a $1$ in the period $\FtensubCompletePeriod$ of the subsequence $\Ftensub$.
\end{lemma}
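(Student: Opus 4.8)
The plan is to locate the four zeros guaranteed by Lemma~\ref{lem:four_zeros} and then compute the term that immediately follows each of them. By Lemma~\ref{lem:four_zeros} together with Corollary~\ref{cor:zeros_of_subsequence_are_0_15_30_and_45}, the four zeros in the period $\FtensubCompletePeriod$ occur at the indices $j$ for which the subscript $k+rj$ is congruent modulo $60$ to one of $0, 15, 30, 45$. If a zero sits at an index whose subscript is $\equiv s \pmod{60}$, then the next term in the (cyclic) period has subscript $(k+rj)+r \equiv s+r \pmod{60}$; since $\Ften$ has period $60$, this follower equals $\F{s+r}$. Thus the four followers of the four zeros are exactly $\F{r}$, $\F{15+r}$, $\F{30+r}$, and $\F{45+r}$, and it suffices to show that one of these four values equals $1$.

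To evaluate these followers, I would invoke Lemma~\ref{lem:powers_of_7} with $n = r$, which gives $\F{15i+r} \equiv 7^i \cdot \F{r} \pmod{10}$ for $i = 0,1,2,3$. Since $7$ has order $4$ in $U(10)$, the multipliers $7^0, 7^1, 7^2, 7^3$ reduce modulo $10$ to $1, 7, 9, 3$, which is precisely the set $U(10) = \{1,3,7,9\}$. Hence the collection of followers $\{\F{r}, \F{15+r}, \F{30+r}, \F{45+r}\}$ coincides with the coset $\F{r}\cdot U(10)$ inside $\mathbb{Z}/10\mathbb{Z}$.

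The final step is a one-line group-theoretic observation: by Lemma~\ref{lem:r_in_U_60_then_Fib_val_is_in_U_10}, $\F{r} \in U(10)$, and since $U(10)$ is closed under multiplication, $\F{r}\cdot U(10) = U(10)$. In particular $1$ lies in this set, so exactly one of the four followers equals $1$; equivalently, the unique $i \in \{0,1,2,3\}$ with $7^i \equiv \F{r}^{-1} \pmod{10}$ pinpoints which zero is followed by a $1$. One can make this inverse explicit by reading off the $M$-values in Lemma~\ref{lem:r_in_U_60_then_Fib_val_has_an_inverse}. This establishes the existence of a $0$ immediately followed by a $1$.

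I anticipate that the only real care-point is the bookkeeping in the first paragraph---tracking that the term after a zero at subscript $\equiv s \pmod{60}$ has subscript $\equiv s+r \pmod{60}$ and appealing to the period-$60$ periodicity of $\Ften$ to read off its value as $\F{s+r}$---rather than in identifying any deep structure. Once the four followers are correctly pinned down as the coset $\F{r}\cdot U(10)$, the conclusion is immediate from the group property of $U(10)$.
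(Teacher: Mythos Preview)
Your proof is correct and shares the paper's overall architecture: locate the four zeros via Lemma~\ref{lem:four_zeros} and Corollary~\ref{cor:zeros_of_subsequence_are_0_15_30_and_45}, compute the term following each zero, and show that one of these followers equals $1$. The difference lies in the middle step. The paper applies Lemma~\ref{lem:Vorobiev} to write the follower of the zero at subscript $M \in \{0,15,30,45\}$ as $\F{M-1}\,\F{r}$, and then invokes Lemma~\ref{lem:r_in_U_60_then_Fib_val_has_an_inverse} together with Corollary~\ref{cor:zeros_of_subsequence_are_0_15_30_and_45} to select the $M$ for which $\F{M-1}$ is the inverse of $\F{r}$ in $U(10)$. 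You instead use Lemma~\ref{lem:powers_of_7} to write the followers as $7^i\,\F{r}$ for $i=0,1,2,3$, observe that $\{7^i \bmod 10\}_{i=0}^3 = U(10)$, and conclude from Lemma~\ref{lem:r_in_U_60_then_Fib_val_is_in_U_10} and group closure that the set of followers is $\F{r}\cdot U(10) = U(10) \ni 1$. The two computations are equivalent (indeed $\F{15i-1} \equiv 7^i \pmod{10}$, so the paper's $\F{M-1}$ and your $7^i$ are the same multiplier), but your route sidesteps Vorobiev's identity and the case-by-case inverse table of Lemma~\ref{lem:r_in_U_60_then_Fib_val_has_an_inverse}, replacing them with a one-line group-closure argument; the paper's route, in exchange, makes the identity of the specific zero that is followed by a $1$ more explicit.
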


\begin{proof}
By Lemma~\ref{lem:four_zeros}, for $r \in U(60)$ the period $\FtensubCompletePeriod$ of $\Ftensub$ contains four equally spaced zeros. Moreover, these four zeros occur at $\F{k + r (j_0 + 15i)}$ for $i = 0, 1, 2, 3$, where $\F{k + r j_0}$ is the first zero of the period for some $j_0 \in \{0, 1, \ldots, 14\}$. For ease of notation, set $h_i := j_0 + 15i$. Then $\F{k + r h_i} = 0$ and 15 divides $k + r h_i$ for all $i \in \mathbb{Z}$. Observe that
\begin{align*}
    \F{k+r(h_i + 1)} &= \F{k + r h_i + r}\\
    &\equiv \F{(k + r h_i) - 1} \F{r} + \F{k + r h_i} \F{r+1} \pmodd{10} &\mbox{by Lemma~\ref{lem:Vorobiev}}\\
    &\equiv \F{(k + r h_i) - 1} \F{r} \pmodd{10},
\end{align*}
where the second congruence holds since $\F{k + r h_i} = 0$ in the first congruence. It suffices to show that we can choose an $i \in \{0,1,2,3\}$ such that $\F{(k + r h_i) - 1}$ is the multiplicative inverse of $\F{r}$ in the group $U(10)$. However, that is precisely what Lemma~\ref{lem:r_in_U_60_then_Fib_val_has_an_inverse} states. Observe that $\F{0 \pm 1} = 1$, $\F{15 \pm 1} = 7$, $\F{30 \pm 1} = 9$, and $\F{45 \pm 1} = 3$, so regardless of what value $\F{r}$ is in the set $\{1,3,7,9\}$, Lemma~\ref{lem:r_in_U_60_then_Fib_val_has_an_inverse} guarantees the existence of an inverse of the form $\F{15i \pm 1}$ for some $i \in \{0,1,2,3\}$. As a consequence of Corollary~\ref{cor:zeros_of_subsequence_are_0_15_30_and_45}, the sets $\{ \F{(k + r h_i) - 1} \}_{i=0}^3$ and $\{ \F{15i - 1} \}_{i=0}^3$ coincide. Hence we can choose an $i \in \{0,1,2,3\}$ such that $\F{k+r(h_i + 1)} \equiv \F{(k + r h_i) - 1} \F{r} \equiv 1 \pmod{10}$, and thus for this $i$ we have $\F{k+rh_i} = 0$ and $\F{k+r(h_i + 1)} = 1$ so there is a 0 followed by a 1 in the period $\FtensubCompletePeriod$.
\end{proof}

We are now ready to prove the main results of this subsection in Theorems~\ref{thm:forward_complete_Fibonacci_result} and \ref{thm:reverse_complete_Fibonacci_result}. Though the two proofs look very similar, there are enough important subtle differences to give thorough proofs for each.

\begin{theorem}\label{thm:forward_complete_Fibonacci_result}
Let $r \in U(60)$ such that $r \equiv 1 \pmod{4}$. Then $\Ftensub$ is a forward complete Fibonacci subsequence.
\end{theorem}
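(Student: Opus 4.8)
The plan is to establish the two defining conditions of a forward complete Fibonacci subsequence—that $\Ftensub$ is a forward quasi-Fibonacci subsequence and that its diagram is Type~3—and then to deduce that it coincides with a forward shift of the parent sequence. The two structural conditions follow immediately from the hypotheses. Since $r \in U(60)$ we have $\gcd(r,60)=1$, so in particular $3 \ndivides r$; combined with the assumption $r \equiv 1 \pmod{4}$, Theorem~\ref{thm:forward_quasi} gives at once that $\F{k+r(j-1)} + \F{k+rj} \equiv \F{k+r(j+1)} \pmod{10}$ for all $j \in \mathbb{Z}$, so $\Ftensub$ is forward quasi-Fibonacci. Likewise $\gcd(r,60)=1$ forces the diagram to be Type~3 by Theorem~\ref{thm:3_types_of_diagrams}, and the period has length $60$ by Corollary~\ref{cor:periodicity_of_subsequence}.

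Writing $b_j := \F{k+rj}$, the forward quasi-Fibonacci relation is exactly the Fibonacci recurrence $b_{j+1} \equiv b_j + b_{j-1} \pmod{10}$, so the entire sequence $(b_j)$ is pinned down by any single adjacent pair. This is where Lemma~\ref{lem:zero_followed_by_a_one} enters: because $r \in U(60)$, the length-$60$ period contains an index $j^\ast$ with $b_{j^\ast} = 0$ and $b_{j^\ast + 1} = 1$. Seeding the recurrence with this pair and inducting on $t \geq 0$ yields $b_{j^\ast + t} = \F{t}$ for every $t$, since both sides satisfy the same recurrence with the same initial values $0,1$. Thus, read starting from index $j^\ast$, the subsequence reproduces the parent sequence $\Ften$ term for term.

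Finally I would invoke periodicity to package this as a genuine forward shift. Both $(b_j)$ and $\Ften$ are periodic of length $60$, and the previous step shows they agree on a full period, namely the block $b_{j^\ast}, \ldots, b_{j^\ast + 59}$ equals $\F{0}, \ldots, \F{59}$. Hence $b_j = \F{j - j^\ast}$ for all $j$, and choosing $N_{k,r} \in \{0, 1, \ldots, 59\}$ with $N_{k,r} \equiv -j^\ast \pmod{60}$ gives $b_j = \F{N_{k,r} + j}$; that is, $\Ftensub$ coincides with $\FtensubNkrFwd$, as required. The only genuinely substantive ingredient is Lemma~\ref{lem:zero_followed_by_a_one}, so the real obstacle lies upstream, in guaranteeing a $0$ immediately followed by a $1$; granting that lemma, the remaining work is the short recurrence-and-periodicity deduction above, with the one delicate point being the index bookkeeping that converts the seed location $j^\ast$ into the shift value $N_{k,r}$.
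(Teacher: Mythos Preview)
Your proposal is correct and follows essentially the same approach as the paper: invoke Theorems~\ref{thm:3_types_of_diagrams} and~\ref{thm:forward_quasi} to obtain a Type~3 forward quasi-Fibonacci subsequence of period~$60$, then apply Lemma~\ref{lem:zero_followed_by_a_one} to locate a $0,1$ seed and conclude that the subsequence is a shift of the parent sequence. If anything, you spell out more detail than the paper does---the explicit induction $b_{j^\ast+t}=\F{t}$ and the conversion of $j^\ast$ into $N_{k,r}$ are left implicit in the paper's proof and handled separately in Subsection~\ref{subsec:computing_j_naught_and_N_k_r}.
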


\begin{proof}
Given that $r \in U(60)$ with $r \equiv 1 \pmod{4}$, the results of Theorem~\ref{thm:3_types_of_diagrams} and \ref{thm:forward_quasi} together imply that $\Ftensub$ is a forward quasi-Fibonacci subsequence with a Type 3 diagram. That is, the subsequence satisfies the recurrence $\F{k+r(j-1)} + \F{k+rj} \equiv \F{k+r(j+1)} \pmod{10}$ for all $j \in \mathbb{Z}$ and is periodic of length 60. Moreover, by Lemma~\ref{lem:zero_followed_by_a_one}, we know that this length 60 period $\FtensubCompletePeriod$ contains a 0 followed by a 1, and hence the subsequence $\Ftensub$ coincides with the original parent sequence $\FtensubNkrFwd$ starting at some index $N_{k,r} \in \{0, 1, \ldots, 59\}$ dependent on the values $k$ and $r$. We conclude that $\Ftensub$ is forward complete Fibonacci subsequence.
\end{proof}

\begin{theorem}\label{thm:reverse_complete_Fibonacci_result}
Let $r \in U(60)$ such that $r \equiv 3 \pmod{4}$. Then $\Ftensub$ is a reverse complete Fibonacci subsequence.
\end{theorem}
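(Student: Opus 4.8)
The plan is to mirror the proof of Theorem~\ref{thm:forward_complete_Fibonacci_result}, adapting each ingredient to the reverse setting. Given $r \in U(60)$ with $r \equiv 3 \pmod{4}$, I would first invoke Theorem~\ref{thm:3_types_of_diagrams} to conclude that the subsequence diagram for $\Ftensub$ is Type~3, and hence the subsequence is periodic of length $60$. Then I would invoke Theorem~\ref{thm:reverse_quasi}: since $r \equiv 3 \pmod{4}$ and $3$ does not divide $r$ (the latter holding automatically because $r \in U(60)$ forces $\gcd(r,60)=1$), the subsequence $\Ftensub$ is a reverse quasi-Fibonacci subsequence, satisfying $\F{k+r(j+1)} + \F{k+rj} \equiv \F{k+r(j-1)} \pmod{10}$ for all $j \in \mathbb{Z}$.

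The crux is then to locate, inside one full period $\FtensubCompletePeriod$, a starting configuration that pins the subsequence to a genuine copy of the Fibonacci sequence run in reverse. The natural anchor is again a $0$ followed by a $1$, since Lemma~\ref{lem:zero_followed_by_a_one} guarantees such an adjacency occurs in the period for every $r \in U(60)$, regardless of the residue of $r$ modulo $4$. So I would apply Lemma~\ref{lem:zero_followed_by_a_one} to produce an index $i$ with $\F{k + r h_i} = 0$ and $\F{k + r(h_i+1)} = 1$, where $h_i = j_0 + 15i$. The point is that the reverse recurrence, together with these two consecutive values $0$ and $1$, determines all remaining terms of the period, and the reverse Fibonacci recurrence seeded by $(0,1)$ reproduces exactly the Fibonacci sequence modulo $10$ traversed backwards: from $0,1$ the previous term is $1-0 \equiv 1$, then $0-1 \equiv 9 \equiv F_{58}$, and so on, matching $\FtensubNkrRev$. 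Hence $\Ftensub$ coincides with $\FtensubNkrRev$ for some $N_{k,r} \in \{0,1,\ldots,59\}$, which is precisely the definition of a reverse complete Fibonacci subsequence.

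The step I expect to require the most care is the last one: articulating \emph{why} the reverse quasi-Fibonacci recurrence anchored at a $0,1$ pair yields the \emph{reverse} parent sequence rather than the forward one. In the forward case (Theorem~\ref{thm:forward_complete_Fibonacci_result}) the recurrence $\F{k+r(j-1)} + \F{k+rj} \equiv \F{k+r(j+1)}$ propagates a $0,1$ seed forward into $0,1,1,2,3,\ldots$, reproducing $\FtensubNkrFwd$. Here the recurrence solves for the term at $j-1$ in terms of those at $j$ and $j+1$, so increasing $j$ walks down the Fibonacci indices; I would emphasize that reading the subsequence in its natural increasing-$j$ direction therefore traces $\Ften$ with decreasing subscript, i.e. matches $\FtensubNkrRev$. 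This is the subtle difference the paper flags when it says the two proofs ``look very similar'' but have ``subtle differences,'' and it is where I would be most careful to get the direction of the index correct rather than merely transcribing the forward argument.
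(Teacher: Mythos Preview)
Your proposal is correct and follows essentially the same route as the paper: invoke Theorem~\ref{thm:3_types_of_diagrams} for the Type~3 diagram and period length $60$, invoke Theorem~\ref{thm:reverse_quasi} (noting $r\in U(60)$ forces $3\nmid r$) for the reverse quasi-Fibonacci recurrence, apply Lemma~\ref{lem:zero_followed_by_a_one} to locate a $0$ followed by a $1$, and conclude. Your added discussion of why the $(0,1)$ seed under the reverse recurrence reproduces $\Ften$ run backward is more explicit than the paper's own proof, which simply asserts this step; one small slip to fix is that the term \emph{preceding} the $0$ comes from the recurrence as $0+1$, not $1-0$, though of course the value agrees.
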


\begin{proof}
Given that $r \in U(60)$ with $r \equiv 3 \pmod{4}$, the results of Theorem~\ref{thm:3_types_of_diagrams} and \ref{thm:reverse_quasi} together imply that $\Ftensub$ is a reverse quasi-Fibonacci subsequence with a Type 3 diagram. That is, the subsequence satisfies the recurrence $\F{k+r(j+1)} + \F{k+rj} \equiv \F{k+r(j-1)} \pmod{10}$ for all $j \in \mathbb{Z}$ and is periodic of length 60. Moreover, by Lemma~\ref{lem:zero_followed_by_a_one}, we know that this length 60 period $\FtensubCompletePeriod$ contains a 0 followed by a 1, and hence the subsequence $\Ftensub$ coincides with the original parent sequence but running in reverse, namely the sequence $\FtensubNkrRev$, starting at some index $N_{k,r} \in \{0, 1, \ldots, 59\}$ dependent on the values $k$ and $r$. We conclude that $\Ftensub$ is reverse complete Fibonacci subsequence.
\end{proof}


\subsection{Methods to compute the values \texorpdfstring{$j_0$}{j0} and \texorpdfstring{$N_{k,r}$}{Nkr}}\label{subsec:computing_j_naught_and_N_k_r}

In this subsection, we give a formula and algorithm, respectively, to compute the values $j_0$ and $N_{k,r}$. In particular, we do the following:
\begin{enumerate}
    \item Compute the first zero of $\Ftensub$; that is, the index value $j_0$ such that $\F{k + r j_0} = 0$ and $j_0 \in \{0, 1, \ldots, 14\}$.
    \item Compute the value $N_{k,r} \in \{0, 1, \ldots, 59\}$ such that $\Ftensub$ coincides with $\FtensubNkrFwd$ (respectively, $\FtensubNkrRev$) when $\Ftensub$ is a forward (respectively, reverse) complete Fibonacci subsequence.
\end{enumerate}

\begin{formula}\label{formula:finding_first_zero}
Let $r \in U(60)$. The first zero in the complete Fibonacci subsequence $\Ftensub$ occurs at the index value $j_0 \in \{0, 1, \ldots, 14\}$ satisfying the congruence $j_0 \equiv r^{-1}(-k) \pmod{15}$.
\end{formula}

\begin{proof}
Let $r \in U(60)$ and $k \in \{0, 1, \ldots 59\}$. We know that $r^{-1}$ exists since $r \in U(60)$ and $U(60)$ is a group. Choose $j_0 \in \{0, 1, \ldots, 14\}$ such that $j_0 \equiv r^{-1}(-k) \pmod{15}$ holds. Then we have the sequence of implications
\begin{align*}
    j_0 \equiv r^{-1}(-k) \pmodd{15} &\;\Longrightarrow\; r j_0 \equiv r r^{-1}(-k) \pmodd{15}\\
    &\;\Longrightarrow\; r j_0 \equiv -k \pmodd{15}\\
    &\;\Longrightarrow\; k + r j_0 \equiv 0 \pmodd{15},
\end{align*}
and so 15 divides $k + r j_0$. By Lemma~\ref{lem:Pisano_divis_by_5}, it follows that $\F{k + r j_0} = 0$. Observe that since the zeros of $\Ftensub$ are equally spaced in gaps of size 15 by Lemma~\ref{lem:four_zeros}, then there cannot exist a $j$ such that $0 \leq j < j_0$ and $\F{k + r j} = 0$. Therefore, the first zero occurs at the index value $j_0$.
\end{proof}

To compute the $N_{k,r}$ value, it is helpful to recall the definition of a primitive root and the discrete logarithm, namely the index function.

\begin{definition}
A \textit{primitive root of $n$} is an element $g$ in the group of units $U(n)$ such that for each element $u \in U(n)$, there exists an $i \in \mathbb{Z}$ for which $u \equiv g^i \pmod{n}$.
\end{definition}

\begin{definition}
Let $g$ be a primitive root of $n$. If $u \in U(n)$, then the smallest integer $i \in \mathbb{N}$ such that $u \equiv g^i \pmod{n}$ is called the \textit{index of $u$ relative to $g$}. We denote this $i$ value as $\ind_g(u)$.
\end{definition}

We care primarily about the group $U(10)$. This group has two primitive roots, namely the values 3 and 7. We exploit the fact that 3, in particular, is a primitive root of 10 in the following method to compute $N_{k,r}$.

\begin{algorithm}\label{algorithm:find_magic_carrot}
Let $r \in U(60)$. To compute $N_{k,r}$, proceed as follows:
\begin{enumerate}[label=\protect\circled{$\arabic*$}]
    \item Find $\tilde{r} \in \{1,3,7,9\}$ such that $\tilde{r} \equiv 
  \begin{cases} 
   r \pmod{10} & \text{if } r \equiv 1 \pmod{4},\\
   -r \pmod{10} & \text{if } r \equiv 3 \pmod{4}.
  \end{cases}$
    \item Find $i_0 \in \{0,1,2,3\}$ such that $i_0 = \ind_3(\tilde{r})$.
    \item Set $M_{i_0} := 15 i_0$ and find $\tilde{j} \in \{0, 1, \ldots, 59\}$ such that $\tilde{j} \equiv r^{-1} (M_{i_0} - k) \pmod{60}$.
    \item Set $N_{k,r} := \begin{cases} 
   60 - \tilde{j} & \text{if } r \equiv 1 \pmod{4},\\
   \tilde{j} & \text{if } r \equiv 3 \pmod{4}.
  \end{cases}$
\end{enumerate}
We conclude that $\Ftensub$ coincides with $\FtensubNkrFwd$ when $r \equiv 1 \pmod{4}$, and $\Ftensub$ coincides with $\FtensubNkrRev$ when $r \equiv 1 \pmod{3}$.
\end{algorithm}

\begin{proof}
Let $r \in U(60)$ and $k \in \{0, 1, \ldots 59\}$. Observe that the sequence $\bigl(k + rj \pmod{60}\bigr)_{j=0}^\infty$ is just a cyclic shift of the sequence $\bigl(rj \pmod{60}\bigr)_{j=0}^\infty$. Hence if we know where the first $0$ followed by a $1$ occurs in $\FtensubZeroK$, then we can find the index value $\tilde{j} \in \{0, 1, \ldots, 59\}$ where this occurs in $\Ftensub$. More precisely, given $j \in \{0, 1, \ldots, 59\}$ such that $\F{rj} = 0$ and $\F{r(j+1)} = 1$, then in particular we know that $rj \pmod{60}$ is congruent modulo 60 to exactly one multiple of 15 in the set $\{0, 15, 30, 45\}$. We claim this is the value $M_{i_0}$ from step~3 in the algorithm, and hence it suffices to find the value $\tilde{j} \in \{0, 1, \ldots, 59\}$ such that $k + r \tilde{j} \equiv M_{i_0} \pmod{60}$ since that would imply $\F{k + r\tilde{j}} = 0$ and $\F{k + r(\tilde{j} + 1)} = 1$, as desired.

We now show that if $\F{rj} = 0$ and $rj \equiv M_{i_0} \pmod{60}$, then $\F{r(j+1)} = 1$ is forced. Since the congruence $rj \equiv M_{i_0} \pmod{60}$ holds, then $r(j+1) = rj + r \equiv M_{i_0} + r \pmod{60}$. Thus we have
\begin{align*}
    \F{r(j+1)} &= \F{M_{i_0}+r}\\
    &\equiv \F{M_{i_0} - 1} \F{r} + \F{M_{i_0}} \F{r+1} \pmodd{10} & \mbox{by Lemma~\ref{lem:Vorobiev}}\\
    & \equiv \F{M_{i_0} - 1} \F{r} \pmodd{10} &\mbox{since $\F{M_{i_0}} = 0$ by Lemma~\ref{lem:Pisano_divis_by_5}}\\
    &\equiv 1 \pmodd{10},
\end{align*}
where this last congruence holds since our choice of $i_0$ in step~2 of the algorithm is the precise value of $i$ in Lemma~\ref{lem:r_in_U_60_then_Fib_val_has_an_inverse} such that $\F{M_i \pm 1}$ is the multiplicative inverse of $\F{r}$.

It suffices to show that for $\tilde{j} \in \{0, 1, \ldots, 59\}$ with $\tilde{j} \equiv r^{-1} (M_{i_0} - k) \pmod{60}$ as given in step~3 of the algorithm, we have $\F{k + r\tilde{j}} = 0$ and $\F{k + r(\tilde{j} + 1)} = 1$. To show $\F{k + r\tilde{j}} = 0$, observe that
\begin{align*}
    \tilde{j} \equiv r^{-1} (M_{i_0} - k) \pmodd{60} &\;\Longrightarrow\; r \tilde{j} \equiv M_{i_0} - k \pmodd{60}\\
    &\;\Longrightarrow\; k + r \tilde{j} \equiv M_{i_0} \pmodd{60},
\end{align*}
and hence $k + r \tilde{j}$ is a multiple of 15 and so $\F{k + r \tilde{j}} = 0$ by Lemma~\ref{lem:Pisano_divis_by_5}. To show $\F{k + r(\tilde{j} + 1)} = 1$, observe the following sequence of equalities and congruences:
\begin{align*}
    \F{k + r(\tilde{j} + 1)} &= \F{(k + r\tilde{j}) + r}\\
    &= \F{M_{i_0} + r} &\mbox{since $k + r \tilde{j} \equiv M_{i_0} \pmodd{60}$}\\
    &\equiv 1 \pmodd{10},
\end{align*}
where again this last congruence holds for the same reason given in the argument in the previous paragraph. Thus beginning at the term $\F{k+r \tilde{j}}$ the subsequence $\Ftensub$ moves forward (respectively, reverse) through the original parent sequence $\Ften$ if $r \equiv 1 \pmod{4}$ (respectively, $r \equiv 3 \pmod{4}$). Now to compute $N_{k,r}$ it is simply a matter of finding which term in the first Pisano period $\left(\newF_{10,n}\right)_{n=0}^{59}$ of $\Ften$ is exactly the first subsequence term $\F{k+r(0)}$ of $\Ftensub$. There are two cases.

\medskip

\noindent \textbf{Case 1}: If $r \equiv 1 \pmod{4}$, then $\Ftensub$ is a forward complete Fibonacci subsequence with the 0 followed by a 1 beginning at the index value $\tilde{j} \in \{0, 1, \ldots, 59\}$ of $\Ftensub$. Hence the first $\tilde{j}$ terms of the period $\FtensubCompletePeriod$ are exactly the last $\tilde{j}$ terms of the parent sequence's period $\left(\newF_{10,n}\right)_{n = 0}^{59}$, namely the terms $\left(\newF_{10,n}\right)_{n = 60-\tilde{j}}^{59}$. Thus the first term of $\FtensubCompletePeriod$ is the element $\F{60-\tilde{j}}$. We conclude that $\Ftensub = \FtensubNkrFwd$ with $N_{k,r} = 60 - \tilde{j}$ if $r \equiv 1 \pmod{4}$.

\medskip

\noindent \textbf{Case 2}: If $r \equiv 3 \pmod{4}$, then $\Ftensub$ is a reverse complete Fibonacci subsequence with the 0 followed by a 1 beginning at the index value $\tilde{j} \in \{0, 1, \ldots, 59\}$ of $\Ftensub$. However, in this setting the subsequence moves in reverse through the parent sequence $\Ften$. So the first $\tilde{j}+1$ terms of $\Ftensub$ are exactly the terms $\bigl(\F{\tilde{j}}, \ldots, \F{2}, \F{1}, \F{0} \bigr) = (\F{\tilde{j}}, \ldots, 1, 1, 0)$ of the parent sequence running in reverse. Thus the first term of $\FtensubCompletePeriod$ is the element $\F{\tilde{j}}$. We conclude that $\Ftensub = \FtensubNkrRev$ with $N_{k,r} = \tilde{j}$ if $r \equiv 3 \pmod{4}$.
\end{proof}

\begin{example}
Recall Example~\ref{example:complete_subsequence} where we considered the subsequence $\Ftensub$ with $k=9$ and $r=13$. By Formula~\ref{formula:finding_first_zero}, we have $j_0 \equiv r^{-1}(-k) \equiv 37(-9) \equiv -333 \equiv 12 \pmod{15}$,
and indeed it is readily verified that $\F{9 + 13(12)}$ is the first zero of $\Ftensubfill{9}{13}$. Let us now compute $N_{9,13}$ using the four steps in Algorithm~\ref{algorithm:find_magic_carrot} as follows.
\begin{enumerate}[label=\protect\circled{$\arabic*$}]
    \item Since $r = 13 \equiv 1 \pmod{4}$, we set $\tilde{r} := 3$.
    \item Set $i_0 := \ind_3(\tilde{r}) = \ind_3(3) = 1$.
    \item Set $M_{i_0} := 15 i_0 = 15$. Then $\tilde{j} \equiv r^{-1} (M_{i_0} - k) \equiv 37 (15 - 9)  \equiv 222 \equiv 42 \pmod{60}$,
    and so we set $\tilde{j} := 42$.
    \item Since $r \equiv 1 \pmod{4}$, set $N_{9,13} := 
   60 - \tilde{j}  = 60 - 42 = 18$.
\end{enumerate}
Indeed, by examining the subsequence terms given in Example~\ref{example:complete_subsequence}, it is readily verified that the subsequence $\Ftensubfill{9}{13}$ is exactly the parent sequence $\left(\newF_{10,n}\right)_{n=N_{9,13}}^\infty$ where $N_{9,13} = 18$.
\end{example}


\section{Open questions}\label{sec:open questions}

In Section~\ref{sec:certain_subsequences}, we made some observations on a variety of specific types of subsequence and we left much for the reader to explore. We add to that in this section but provide more general open questions related to our research.

\begin{question}
What can we generalize to arbitrary moduli $m$? For example, consider these open questions.
\begin{enumerate}
    \item In the $m=10$ setting, we found that $r$ values in the set $U(60)$, namely the $r$ values such that $\gcd(r,60)=1$, the subsequences $\Ftensub$ yielded the complete $\FtensubNkrFwd$ sequence either forward or reverse. Noting that the Pisano period $\pi(10)$ equals 60, we question whether for arbitrary moduli values $m$, will $r$ values in the set $U(\pi(m))$ give subsequences $\Fmnsub$ yielding the complete $\left(\newF_{m,n}\right)_{n = N_{m,k,r}}^\infty$ sequence, either forward or reverse, for some constant $N_{m,k,r} \in \{ 0, 1, \ldots, \pi(m)-1 \}$ dependent on $m$, $k$, and $r$? That is, if $\gcd(r, \pi(m)) = 1$, then do we get a Type~3 subsequence generalization for $\Fmnsub$?
    \item Do there exist moduli values $m$ for which there is no Type 3 subsequence generalization for $\Fmnsub$? That is, do there exist $m$ values such that the subsequence $\Fmnsub$ never recovers the original sequence $\Fmn$ for any jump size $r$?
    \item In the $m=10$ setting in Theorems~\ref{thm:forward_quasi} and \ref{thm:reverse_quasi}, we gave sufficiency conditions on the value $r$ that would guarantee $\Ftensub$ is either a forward or reverse quasi-Fibonacci subsequence. In particular, these conditions depended on the residue class modulo 4 of odd $r$ values that are not divisible by 3. For arbitrary $m$, can one generalize these sufficiency conditions for the subsequence $\Fmnsub$ of $\Fmn$?
\end{enumerate}
\end{question}

\begin{question}
In Remark~\ref{rem:r_equals_25_gives_Lucas_sequence}, we observed that $\Ftensubfill{3}{25}$ and $\Ftensubfill{3}{5}$ both yield the Lucas sequence modulo 10. Can we classify all $k$ and $r$ values such that $\Ftensub$ is the Lucas sequence modulo 10?
\end{question}

\begin{question}
In Lemma~\ref{lem:tantalizing_congruence_result}, we stated that for $r$ values relatively prime to 60, we have the following two tantalizing observations:
\begin{itemize}
    \item If $r \in U(60)$ and $r \equiv 1 \pmod{4}$, then $\F{r} \equiv r \pmod{10}$.
    \item If $r \in U(60)$ and $r \equiv 3 \pmod{4}$, then $\F{r} \equiv -r \pmod{10}$.
\end{itemize}
This is true by brute force confirmation of the sixteen elements in $U(60)$. However that is a very unsatisfying ``proof'' and merely confirms that the result is true but does not explain \textit{why} it is true. Can one come up with a more illuminating proof of this very beautiful result? Moreover, is there some natural generalization of this result for arbitrary moduli $m$?
\end{question}

\section*{Acknowledgments}
We acknowledge David Cochrane for introducing this topic to the second author Mbirika in Cochrane's YouTube video connecting certain patterns in the Fibonacci sequence modulo 10 to applications in astrology~\cite{Cochrane2015}. We also thank Emily Gullerud for her \TeX\, skills using TiKz to create many of the figures in this paper.



\begin{appendices}
\renewcommand{\thesection}{A}
\section*{Appendix}
\subsection{Step-by-step construction of \texorpdfstring{$\Ftensub$}{the subsequence} when \texorpdfstring{$k=3$}{k=3} and \texorpdfstring{$r=25$}{r=25}}\label{appendix_A}

\begin{center}
{\renewcommand{\arraystretch}{1.2}
\begin{tabular}{|c|c|}
\hline
$\left(\Ftenfill{3}{25}\right)_{n=0}^1 = (2,1)$ & $\left(\Ftenfill{3}{25}\right)_{n=0}^2 = (2,1,3)$ \\ \hline
\modcirclenk{25}{3}{3}{0}{0}{.5} & \modcirclenk{25}{3}{3}{0}{1}{.5} \\ \hline\hline
$\left(\Ftenfill{3}{25}\right)_{n=0}^{3} = (2,1,3,4)$ & $\left(\Ftenfill{30}{25}\right)_{n=0}^{4} = (2,1,3,4,7)$ \\ \hline
\modcirclenk{25}{3}{3}{0}{2}{.5} & \modcirclenk{25}{3}{3}{0}{3}{.5} \\ \hline\hline
$\left(\Ftenfill{3}{25}\right)_{n=0}^{5} = (2,1,3,4,7,1)$ & $\left(\Ftenfill{3}{25}\right)_{n=0}^{6} = (2,1,3,4,7,1,8)$ \\ \hline
\modcirclenk{25}{3}{3}{0}{4}{.5} & \modcirclenk{25}{3}{3}{0}{5}{.5} \\
\hline
\end{tabular}
}
\end{center}

\begin{center}
{\renewcommand{\arraystretch}{1.2}
\begin{tabular}{|c|c|}
\hline
$\left(\Ftenfill{3}{25}\right)_{n=0}^{7} = (2,1,3,4,7,1,8,9)$ & $\left(\Ftenfill{3}{25}\right)_{n=0}^{8} = (2,1,3,4,7,1,8,9,7)$ \\ \hline
\modcirclenk{25}{3}{3}{0}{6}{.5} & \modcirclenk{25}{3}{3}{0}{7}{.5} \\ \hline\hline
$\left(\Ftenfill{3}{25}\right)_{n=0}^{9} = (2,1,3,4,7,1,8,9,7,6)$ & $\left(\Ftenfill{3}{25}\right)_{n=0}^{10} = (2,1,3,4,7,1,8,9,7,6,3)$ \\ \hline
\modcirclenk{25}{3}{3}{0}{8}{.5} & \modcirclenk{25}{3}{3}{0}{9}{.5} \\ \hline\hline
$\left(\Ftenfill{3}{25}\right)_{n=0}^{11} = (\text{\small 2,1,3,4,7,1,8,9,7,6,3,9})$ & $\left(\Ftenfill{3}{25}\right)_{n=0}^{12} = (\text{\small 2,1,3,4,7,1,8,9,7,6,3,9,2})$ \\ \hline
\modcirclenk{25}{3}{3}{0}{10}{.5} & \modcirclenk{25}{3}{3}{0}{11}{.5} \\
\hline
\end{tabular}
}
\end{center}

\newpage

\subsection{The research team}
The research team for this project are Miko Scott, Dr.~aBa Mbirika, and Dan Guyer (shown left-to-right in the figures below). We acknowledge a number of \textit{research centers} in Eau Claire, Wisconsin, where we conducted much of this research, and we thank them for their hospitality.

\begin{figure}[H]
\begin{center}
    \includegraphics[width=4in]{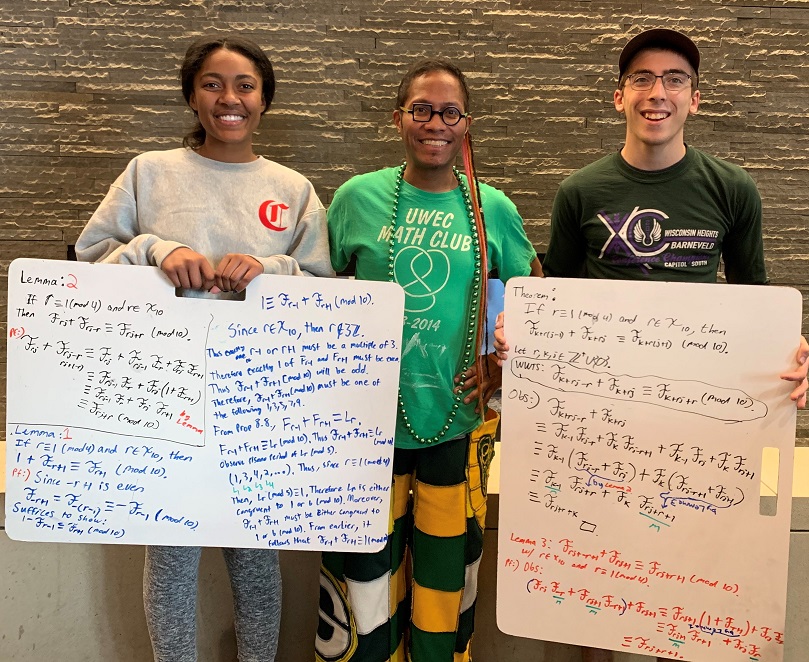}
\end{center}
\caption{The research team working on some Section~\ref{sec:complete_fibonacci_subsequences} results at ECDC coffee shop}
\end{figure}

\begin{figure}[H]
\begin{center}
    \includegraphics[width=4in]{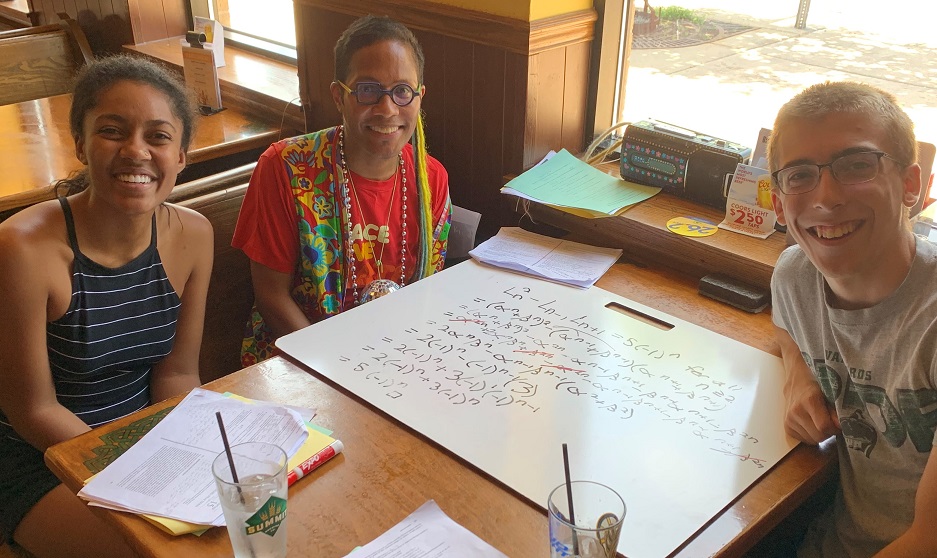}
\end{center}
\caption{The research team working on a Lucas sequence identity at Dooley's Irish Pub}
\end{figure}

\end{appendices}

\end{document}